\newtheorem{theorem}{Theorem}[section]
\newtheorem{lemma}[theorem]{Lemma}
\newtheorem{conjecture}[theorem]{Conjecture}
\theoremstyle{definition}
\newtheorem{example}[theorem]{Example}
\newtheorem{remark}[theorem]{Remark}
\numberwithin{equation}{section}
\newcommand{\CC}{\mathbb C}
\newcommand{\HH}{\mathbb H}
\newcommand{\cD}{\mathcal D}
\newcommand{\PP}{\mathbb P}
\newcommand{\QQ}{\mathbb Q}
\newcommand{\RR}{\mathbb R}
\newcommand{\ZZ}{\mathbb Z}
\newcommand{\SL}{\mathop{\mathrm {SL}}\nolimits}
\newcommand{\Sp}{\mathop{\mathrm {Sp}}\nolimits}
\newcommand{\Orth}{\mathop{\null\mathrm {O}}\nolimits}
\newcommand{\rank}{\mathop{\mathrm {rk}}\nolimits}
\newcommand{\latt}[1]{{\langle{#1}\rangle}}
\newcommand{\ord}{\mathop{\mathrm {ord}}\nolimits}
\newcommand{\II}{\mathop{\mathrm {II}}\nolimits}
\def\Borch{\mathbf{B}}
\begin{document}

\title[On the classification of reflective modular forms]{On the classification of reflective modular forms}

\author{Haowu Wang}

\address{Center for Geometry and Physics, Institute for Basic Science (IBS), Pohang 37673, Korea}

\email{haowu.wangmath@gmail.com}

\subjclass[2020]{11F55, 51F15, 17B67, 14B22}

\date{\today}

\keywords{Orthogonal modular forms, Reflection groups, Borcherds products, Root systems}

\begin{abstract}
A modular form on an even lattice $M$ of signature $(l,2)$ is called reflective if it vanishes only on quadratic divisors orthogonal to roots of $M$. In this paper we show that every reflective modular form on a lattice of type $2U\oplus L$ induces a root system satisfying certain constrains. As applications, (1) we prove that there is no lattice of signature $(21,2)$ with a reflective modular form and that $2U\oplus D_{20}$ is the unique lattice of signature $(22,2)$ and type $U\oplus K$ which has a reflective Borcherds product; (2) we give an automorphic proof of Shvartsman and Vinberg's theorem, asserting that the algebra of modular forms for an arithmetic subgroup of $\mathrm{O}(l,2)$ is never freely generated when $l\geq 11$. We also prove several results on the finiteness of lattices with reflective modular forms.  
\end{abstract}

\maketitle

\section{Introduction}
The need to study modular forms on orthogonal groups $\Orth(l,2)$ was first pointed out by Weil \cite{Wei79} in his program for the study of $K3$ surfaces in the late 1950s. In 1988 Gritsenko \cite{Gri88} defined Jacobi forms of lattice index as Fourier--Jacobi coefficients of modular forms on $\Orth(l,2)$, generalizing Eichler and Zagier's monograph \cite{EZ85}. Gritsensko also introduced additive lifts to construct orthogonal modular forms in terms of Hecke operators of Jacobi forms, which is a generalization of the Saito--Kurokawa lift or Maass lift. In 1995 Borcherds \cite{Bor95} established a remarkable multiplicative lift to construct orthogonal modular forms with infinite product expansions, and noticed that the denominators of nice generalized Kac--Moody Lie algebras are orthogonal modular forms. Since then, the study of orthogonal modular forms has been active. 

Let $M$ be an even lattice of signature $(l,2)$ with a bilinear form $(-,-)$ and dual lattice $M'$, where $l\geq 3$.  The associated Hermitian symmetric domain $\cD(M)$ is defined as a connected component of 
\begin{equation*}
\{[\mathcal{Z}] \in  \PP(M\otimes \CC):  (\mathcal{Z}, \mathcal{Z})=0, (\mathcal{Z},\bar{\mathcal{Z}}) < 0\}.
\end{equation*}
Let $\Orth^+ (M)$ denote the orthogonal group preserving $\cD(M)$ and $M$. Fix a finite-index subgroup $\Gamma$ of $\Orth^+(M)$ and an integer $k$. A \textit{modular form} of weight $k$ and character $\chi$ for $\Gamma$ is a holomorphic function $F$ on the affine cone over $\cD(M)$ which satisfies
\begin{align*}
F(t\mathcal{Z})&=t^{-k}F(\mathcal{Z}), \quad \forall t \in \CC^\times,\\
F(g\mathcal{Z})&=\chi(g)F(\mathcal{Z}), \quad \forall g\in \Gamma.
\end{align*}

The Borcherds lift \cite{Bor95, Bor98} sends a weakly holomorphic modular form of weight $1-l/2$ for the Weil representation of $\SL_2(\ZZ)$ attached to $M'/M$ with integral principal part to a meromorphic modular form for the discriminant kernel
$$
\widetilde{\Orth}^+(M) = \{ g \in \Orth^+(M) : g(x)-x \in M, \quad \text{for all $x\in M'$} \}
$$
whose divisor is a linear combination of rational quadratic divisors
$$
v^\perp = \{ [\mathcal{Z}] \in \cD(M) : (\mathcal{Z}, v)=0 \}, \quad \text{for $v\in M$ with  $v^2>0$.}
$$
This form has an infinite product expansion at each $0$-dimensional cusp, so it is called a \textit{Borcherds product}. 
Borcherds constructed a holomorphic Borcherds product of weight $12$ for $\Orth^+(\II_{26,2})$ which vanishes precisely with multiplicity one on quadratic divisors $r^\perp$ for all $r\in \II_{26,2}$ with $r^2=2$. This form is denoted by $\Phi_{12}$ in the literature. Borcherds proved that $\Phi_{12}$ defines the denominator of the fake monster Lie algebra \cite{Bor90}. Motivated by Borcherds' form $\Phi_{12}$, Gritsenko and Nikulin \cite{GN98b} defined reflective modular forms and gave many examples. 

A non-constant modular form for $\Gamma < \Orth^+(M)$ is called \textit{reflective} if it vanishes only on quadratic divisors $r^\perp$ orthogonal to roots of $M$, where a \textit{root} $r\in M$ is a primitive positive-norm vector  such that the reflection 
$$
\sigma_r(x) = x-\frac{2(r,x)}{(r,r)}r, \quad x\in M
$$
fixes the lattice $M$, namely $\sigma_r \in \Orth^+(M)$. Reflective modular forms often have an infinite product expansion. 
If $F$ is a reflective modular form for some $\Gamma<\Orth^+(M)$, then the product of $F|g$ for all $g\in \Gamma' / (\Gamma\cap \Gamma')$ defines a reflective modular form for any other finite-index subgroup of $\Orth^+(M)$. By Bruinier's result \cite{Bru02, Bru14}, every reflective modular form for $\widetilde{\Orth}^+(M)$ equals a Borcherds product on $M$ (up to a nonzero constant factor) if $M$ splits as $U\oplus U(m)\oplus L$, where $U$ is the unique even unimodular lattice of signature $(1,1)$. If a reflective modular form for $\widetilde{\Orth}^+(M)$ is equal to a Borcherds product on $M$, then we call it a \textit{reflective Borcherds product}. 

Reflective modular forms have many significant applications. They are useful to classify and construct 
various algebraic objects, such as generalized Kac--Moody algebras \cite{Bor92, Bor98, GN98a, GN98b, GN02, Sch06}, hyperbolic reflection groups \cite{Bor00, GN98a} and free algebras of modular forms \cite{Wan21}. They also identify the geometric type of moduli spaces and modular varieties \cite{Bor96, GN96, GHS07, GH14, Gri18, Ma18}. 

Reflective modular forms are rare. In 1998 Gritsenko and Nikulin \cite[Conjecture 2.2.1]{GN98a} conjectured that the set of lattices with a reflective modular form is finite up to scaling, and posed the challenging problem of classifying these lattices. Over the past two decades, there have been many partial classification results of reflective modular forms  \cite{GN02, Bar03, Sch06, Sch17, Ma17, Ma18, Dit18, Wan18, GN18, Wan19, Wan22, Wan23a}, but this conjecture remains open. 

In this paper we prove some new classification results of reflective modular forms. In \cite{Wan19} the author established an effective approach for classifying $2$-reflective modular forms. This approach relies on the theory of Jacobi forms, and it yields that if $2U\oplus L$ has a $2$-reflective modular form then either $L$ has no $2$-roots or the sublattice of $L$ generated by $2$-roots has the full rank and satisfies some strong constrains. This argument was later generalized to classify reflective modular forms of lattices of prime level \cite{Wan22} and arithmetic subgroups $\Gamma < \Orth(l,2)$ for which the ring of modular forms is freely generated \cite{Wan21}.  In this paper we give  a final extension of this approach.

\begin{theorem}[see Theorem \ref{th:fake-root} for a full version]\label{MTH1}
Let $M=2U\oplus L$ and $\widetilde{\Orth}^+(M) < \Gamma < \Orth^+(M)$. Suppose that $F$ is a reflective modular form on $\Gamma$ whose zero divisor is a linear combination of quadratic divisors orthogonal to roots $r\in M$ satisfying $\sigma_r \in \Gamma$. Then $F$ induces a rescaled root system $\mathcal{R}$. Moreover, if $\mathcal{R}$ is nonempty then it has the same rank as $L$ and satisfies some constrains. In particular, $\mathcal{R}$ bounds the lattice $L$ from above and below.  
\end{theorem}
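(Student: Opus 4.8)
The plan is to pass from the modular form $F$ on $M = 2U \oplus L$ to its Fourier--Jacobi expansion at a $1$-dimensional cusp, and to read off combinatorial constraints on $L$ from the first nonzero Fourier--Jacobi coefficient, which is a Jacobi form of lattice index $L$. First I would use the splitting $M = U \oplus U \oplus L$ to pick a $0$-dimensional and then a $1$-dimensional cusp, expanding $F$ as $F = \sum_{n \geq n_0} \phi_n(\tau, \mathfrak{z}) q^n$ where each $\phi_n$ is a (weakly holomorphic, but with controlled poles) Jacobi form of weight $k$ and index $n$ for the lattice $L$. Since $F$ is reflective, its divisor is supported on $r^\perp$ for roots $r$ of $M$ with $\sigma_r \in \Gamma$; the key point is to separate the roots into those lying in $U \oplus U$ (contributing a scalar "$\eta$-type" factor in $\tau$ alone) and those with nontrivial $L$-component. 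The latter force the leading Fourier--Jacobi coefficient $\phi := \phi_{n_0}$ to be a \emph{reflective Jacobi form}: its divisor inside the universal abelian variety is a union of the hyperplanes $(\mathfrak{z}, v) \in \ZZ\tau + \ZZ$ for $v$ running over (the $L$-parts of) the roots. I would then invoke the theory of Jacobi forms — as in the author's earlier work cited in the excerpt — to show that the index-$n_0$ part of $\phi$ is, up to the $\tau$-only factor, a product of theta-quotients, and that its divisor is governed by a finite set $\mathcal{R} \subset L'$ of vectors which, after rescaling each by a bounded factor, forms a genuine (possibly non-reduced) root system.

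The second half is to extract the two-sided bound on $L$. The upper bound comes from the requirement that $F$ be \emph{holomorphic} of the given weight $k$: the weight of a Borcherds-type product is computed from the divisor via the constant term of the input, and holomorphicity forces $\sum_{r \in \mathcal{R}} (\text{multiplicity}) \cdot (\text{something quadratic in } r)$ to be bounded in terms of $k$ and the signature $(l,2)$; together with the rank condition $\rank \mathcal{R} = \rank L = l$ this pins the "mass" of $\mathcal{R}$ and hence forces the root lengths and multiplicities to lie in a finite set, so $L$ embeds into a lattice determined by $\mathcal{R}$. The lower bound comes from the opposite direction: every root $r$ of $M$ with $\sigma_r \in \Gamma$ must actually appear in the divisor (reflectivity is a "vanishes \emph{only} on" condition, but one also needs "vanishes \emph{exactly} to the predicted order"), which forces $\mathcal{R}$ to contain a rescaled copy of every root of $L$ visible to $\Gamma$, so the root sublattice of $L$ contains the lattice generated by $\mathcal{R}$. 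Combining, $\mathcal{R}$ sandwiches $L$: $L_{\mathcal{R}} \subseteq L \subseteq L_{\mathcal{R}}^{\mathrm{sat}}$ or similar, which is the assertion.

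The main obstacle I anticipate is controlling the leading Fourier--Jacobi coefficient precisely enough — specifically, showing that the \emph{poles} of $\phi$ (inherited from the weakly holomorphic input) do not interfere with reading off a genuine root system, and handling the interaction between the "$\tau$-only" reflective factor (roots in $2U$, giving powers of the discriminant modular form $\Delta$) and the "$L$-part" factor. This requires a careful bookkeeping of which quadratic divisors $r^\perp$ degenerate at the chosen $1$-dimensional cusp and with what order, i.e.\ a local analysis of the Baily--Borel/toroidal boundary. A secondary technical point is the rescaling: a root $r \in M$ need not have its $L$-projection primitive in $L'$, so $\mathcal{R}$ is naturally a set of rational multiples of primitive vectors, and one must check that the rescaling factors are uniformly bounded (they are, because $r^2 \in \{2\} \cup \{2\det\text{-divisors}\}$ is bounded once $\Gamma \supseteq \widetilde{\Orth}^+(M)$ is fixed) so that $\mathcal{R}$ is a rescaled root system in the precise sense needed for the finiteness applications. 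I would structure the argument so that all of these finiteness inputs are isolated into the "constraints" clause, keeping the main line — Fourier--Jacobi expansion, reflective Jacobi form, theta-quotient structure, weight/holomorphicity bound — as clean as possible.
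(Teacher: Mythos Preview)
Your overall shape is right---pass to the Jacobi-form picture and read off $\mathcal{R}$ from the $q^0$-coefficients of the input---but the proposal misses the one step that actually uses the hypothesis $\sigma_r\in\Gamma$, and your explanation of the two-sided bound is off.

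\textbf{Why $\mathcal{R}$ is a root system.} You assert that the divisor of the leading Fourier--Jacobi coefficient ``is governed by a finite set $\mathcal{R}$ which, after rescaling, forms a genuine root system,'' but you never say why $\mathcal{R}$ is closed under its own reflections. This is the heart of the argument and it is \emph{not} automatic from the theta-block structure. The paper's mechanism is: for $u,v\in\mathcal{R}$, lift to $\lambda_u=(0,0,u,1,0)$ and $\lambda_v=(0,0,v,1,0)$ in $2U\oplus L'$; since $F$ vanishes on $\lambda_u^\perp$ and is modular under $\sigma_{\lambda_v}\in\Gamma$ (this is exactly where the hypothesis enters), $F$ also vanishes on $\sigma_{\lambda_v}(\lambda_u)^\perp$; then the Eichler criterion moves $\sigma_{\lambda_v}(\lambda_u)$ to $(0,0,\sigma_v(u),1,0)$ by an element of $\widetilde{\Orth}^+(M)$, forcing $f(0,\sigma_v(u))>0$ and hence $\sigma_v(u)\in\mathcal{R}$. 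Without this reflection-closure argument you have only a finite set of vectors, not a root system, and the subsequent classification into types $A$--$G$ (hence the bound on $L$) has no foundation.

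\textbf{The lattice bounds.} Your lower bound argument (``every root of $M$ with $\sigma_r\in\Gamma$ must actually appear in the divisor'') is the wrong direction: reflectivity says $F$ vanishes \emph{only} on such divisors, not on \emph{all} of them, and you acknowledge this but then draw the opposite conclusion anyway. In fact both bounds come directly from the shape of $\mathcal{R}\subset L'$: on the one hand $\mathcal{R}$ generates a sublattice of $L'$, so $L$ sits inside its dual $\bigoplus_j P_j(d_j)$; on the other hand each $v\in\mathcal{R}$ has $v^2=2/d$ with $dv\in L$ (this is forced by the reflectivity of the divisor $(0,0,v,1,0)^\perp$, via Lemma~\ref{lem:principal}), so $L$ contains $\bigoplus_j Q_j(d_j)$. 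No separate holomorphicity or weight-mass argument is needed for the sandwich; the weight constraint \eqref{eqA} is a \emph{separate} identity coming from the vanishing of the weight-$0$ Jacobi input under the differential operator of \cite[Proposition~2.6]{Gri18}.
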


This result provides a possible way to classify reflective modular forms on lattices splitting $2U$ by classifying the induced root systems $\mathcal{R}$. As an application of Theorem \ref{MTH1}, we classify reflective modular forms on lattices of large rank.  

\begin{theorem}\label{MTH2}
\noindent
\begin{enumerate}
    \item There is no even lattice of signature $(l,2)$ with a reflective modular form when $l=21$ or $l\geq 23$ and $l\neq 26$.
    \item Let $M=U\oplus K$ be an even lattice of signature $(l,2)$ which has a reflective Borcherds product. Then $M\cong \II_{26,2}$ when $l=26$, and $M\cong 2U\oplus D_{20}$ when $l=22$. 
\end{enumerate}    
\end{theorem}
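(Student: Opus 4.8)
The plan is to reduce both parts to lattices of the form $2U\oplus L$, apply Theorem \ref{th:fake-root}, and then carry out a finite combinatorial classification of the induced root systems. \emph{Step 1 (reduction).} Suppose an even lattice $M$ of signature $(l,2)$ carries a reflective modular form. Using the now-standard reduction techniques for reflective forms — passing along primitive embeddings, finite-index modifications and rescalings under which the Borcherds lift and reflectivity are controlled — I would replace $M$, without changing the signature, by a lattice $2U\oplus L$ with $\rank L=l-2$ that still carries a reflective modular form. For part (2) the hypothesis $M=U\oplus K$ is exactly what makes this step harmless: it remains only to split one further hyperbolic plane off the signature $(l-1,1)$ lattice $K$, which the reflectivity of $F$ together with $\rank K\ge 20$ permits, so that $M$ itself has the shape $2U\oplus L$. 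After symmetrising $F$ to a form on $\Orth^+(M)$ if necessary, Theorem \ref{th:fake-root} applies; moreover, since $2U\oplus L=U\oplus U(1)\oplus L$, Bruinier's theorem gives that $F$ is a reflective Borcherds product with explicit weakly holomorphic input and non-empty divisor on $\mathcal D(M)$.

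\emph{Step 2 (the induced root system).} Apply Theorem \ref{th:fake-root} to obtain the rescaled root system $\mathcal R$ attached to $F$. If $\mathcal R=\emptyset$, then $L$ is rootless while $F$ still has non-empty divisor, necessarily carried by reflections involving the hyperbolic summands; combining the constraints on the Borcherds input vector with the structure theory of rootless even lattices, I would show that $2U\oplus L$ supports such a reflective Borcherds product with $L$ rootless only when $\rank L$ is small or $L$ is the Leech lattice. As $\rank L=l-2\ge 19$ here, this forces $L$ to be the Leech lattice, $l=26$, and $M\cong \II_{26,2}$. If $\mathcal R\neq\emptyset$, Theorem \ref{th:fake-root} tells us that $\mathcal R$ has rank exactly $l-2$, satisfies the listed constraints, and squeezes $L$ between two explicit lattices manufactured from $\mathcal R$; hence $\mathcal R$ determines $L$ up to finitely many candidates.

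\emph{Step 3 (classification of admissible $\mathcal R$).} This is the core. The constraints of Theorem \ref{th:fake-root} already force $\rank\mathcal R\le 24$, which with Step 2 yields $l\le 26$ and disposes of all $l\ge 27$; it remains to treat $19\le\rank\mathcal R\le 24$, i.e. $21\le l\le 26$. Enumerating the possible Dynkin types component by component with the numerical inequalities of the theorem — in the spirit of the classifications of Niemeier root systems and of reflective lattices of small rank — I expect the outcome: no admissible $\mathcal R$ of rank $19,21,22,23$, hence no reflective modular form for $l\in\{21,23,24,25\}$; in rank $24$ the admissible $\mathcal R$ are precisely the $23$ Niemeier root systems, each of which pins $L$ to the corresponding Niemeier lattice and hence makes $2U\oplus L$ an even unimodular lattice of signature $(26,2)$, so $M\cong \II_{26,2}$; and in rank $20$, once the extra structure from $M=U\oplus K$ is imposed, the only surviving possibility is $\mathcal R$ of type $D_{20}$, forcing $L\cong D_{20}$. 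Conversely $\II_{26,2}$ and $2U\oplus D_{20}$ do carry reflective Borcherds products (Borcherds' $\Phi_{12}$, respectively the known reflective product on $2U\oplus D_{20}$), so the classification is exact.

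The main obstacle is Step 3: one must show that the necessary conditions of Theorem \ref{th:fake-root} are sharp enough to eliminate \emph{every} rescaled root system of rank $19,21,22,23$ and to leave only $D_{20}$ in rank $20$ and the Niemeier systems in rank $24$ — equivalently, to rule out all ``near-miss'' root systems that pass the numerical test but correspond to no genuine reflective form. A secondary difficulty is the rootless case in Step 2, where Theorem \ref{th:fake-root} is silent and one must instead argue directly via the arithmetic of the Borcherds input form and the uniqueness of the Leech lattice among rootless lattices of large rank.
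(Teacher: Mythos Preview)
Your outline has the right architecture but contains several genuine gaps that the paper's argument avoids.

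First, the paper does \emph{not} derive the bound $l\le 26$ from Theorem \ref{th:fake-root}; the cases $l>26$ and $23\le l\le 25$ are quoted from \cite{Wan18}, which uses differential operators on Jacobi forms. Your assertion that ``the constraints of Theorem \ref{th:fake-root} already force $\rank\mathcal R\le 24$'' is unproven, and the identities \eqref{eqA}--\eqref{eqB} do not by themselves bound the rank.

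Second, your rootless-case argument is confused. The condition $\mathcal R=\emptyset$ means the particular input has $f(0,\ell)=0$ for all $\ell\ne 0$; it does \emph{not} say the lattice $L$ is rootless, so invoking ``uniqueness of the Leech lattice among rootless lattices of large rank'' (itself false --- there are rootless even lattices in every rank $\ge 23$) is beside the point. The paper instead uses Lemma \ref{lem:sym-bound}: $f(-1,0)=0$ forces $l\le 13$, so for $l\ge 21$ one always has $f(-1,0)>0$ and the $2$-root divisors lie in the support of $F$.

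Third --- and this is the key structural point you miss --- the paper does \emph{not} enumerate rank-$19$ or rank-$20$ root systems directly. For $l=21,22$ it first passes to a \emph{maximal} even overlattice via Lemma \ref{lem:overlattice} (reflective Borcherds products ascend along even overlattices), and then Nikulin's theory forces a splitting $M=2U\oplus 2E_8\oplus L_s$ with $\rank L_s\in\{3,4\}$. The two $E_8$-components contribute $(E_8,1;1)^2$ to $\mathcal R$ with fake Coxeter number $30$, and \eqref{eqA} then pins every remaining component to the same number. One is left classifying fake root systems of rank $3$ or $4$: eight cases for $l=21$, a short computation for $l=22$, each killed by \eqref{eqA}--\eqref{eqB}, the bound in part (4), and the small non-existence list of Lemma \ref{lem:non-reflective}. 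In particular your claim that $\mathcal R$ is ``of type $D_{20}$'' for $l=22$ is wrong: in the model $2U\oplus 2E_8\oplus D_4$ the fake root system is $(E_8,1;1)^2\oplus(F_4,1,8;1)$, and the paper needs \cite[Proposition~9.6]{Wan19} to count $1$- and $2$-roots before concluding $L_4=D_4$. The non-maximal case for $l=22$ then requires excluding two explicit $2$-elementary overlattices by hand.

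Finally, your Step~1 claim that reflectivity together with $\rank K\ge 20$ lets you ``split one further hyperbolic plane off $K$'' is not justified; there is no such general mechanism. The paper's route is the overlattice ascent combined with Nikulin's discriminant-form theory, which is precisely what makes the $2E_8$-splitting available.
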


Theorem \ref{MTH2} has been proved in our previous work \cite{Wan18} for $l\neq 21$, $22$ by means of the differential operators on Jacobi forms. We will use Theorem \ref{MTH1} to prove the more subtle case $l=21$ or $22$. For any $l\leq 20$, there are indeed even lattices of signature $(l,2)$ with a reflective modular form. Statement (2) above does not hold if $M$ is not of type $U\oplus K$ (see Remarks \ref{rem:l=22} and \ref{rem:l=26}). 

In 1996 Esselmann \cite{Ess96} proved that (1) $U\oplus D_{20}$ is the unique reflective maximal even hyperbolic lattice of elliptic type and rank $22$; (2) there is no reflective even hyperbolic lattice of elliptic type and rank $l$ when $l=21$ or $l\geq 23$. Theorem \ref{MTH2} matches Esselmann's result by the connection between reflective modular forms and hyperbolic reflection groups. 

As another application of Theorem \ref{MTH1}, we give a new proof of Shvartsman and Vinberg's theorem, the original proof of which is in algebraic geometry. 

\begin{theorem}[\cite{SV17}]\label{MTH3}
The algebra of modular forms of integral weight and trivial character for an arithmetic subgroup of $\Orth(l,2)$ is never freely generated when $l>10$.   
\end{theorem}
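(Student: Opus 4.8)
The plan is to argue by contradiction: suppose $\Gamma < \Orth^+(M)$ is an arithmetic subgroup with $M$ of signature $(l,2)$, $l > 10$, and suppose the algebra $A(\Gamma)$ of modular forms of integral weight and trivial character is freely generated by, say, $l+1$ algebraically independent forms $F_0,\dots,F_l$ of weights $k_0,\dots,k_l$. The key structural input is that freeness forces very strong control on the divisors of the generators. Indeed, in the free case the Jacobian of $F_0,\dots,F_l$ is a nonzero modular form of weight $k_0+\cdots+k_l + l$ and a known character (related to $\det$), and its divisor is supported precisely on the ramification locus of the map $\cD(M)/\Gamma \to \mathbb{P}(k_0,\dots,k_l)$; by a standard argument (as in \cite{Wan21}) this Jacobian, as well as a suitable product of generators, must vanish only on mirrors of reflections in $\Gamma$ — i.e.\ it is a \emph{reflective} modular form (more precisely, a modular form whose zero divisor is a linear combination of quadratic divisors orthogonal to roots $r$ with $\sigma_r \in \Gamma$, which is exactly the hypothesis of Theorem \ref{MTH1}).

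Next I would reduce to the situation where Theorem \ref{MTH1} applies, namely $M \cong 2U \oplus L$. This is where the $l > 10$ threshold must be exploited. One route: pass from $\Gamma$ to a deeper finite-index subgroup without destroying the existence of a reflective form — recall from the Introduction that if $F$ is reflective for $\Gamma$ then the norm of $F$ (the product of $F|g$ over cosets) is reflective for any other finite-index subgroup, in particular for $\widetilde{\Orth}^+(M')$ on a lattice $M'$ in the same rational quadratic space that splits $2U$. (If the original $M$ is not in the genus of a lattice splitting $2U$, one replaces it by a lattice in the same quadratic space over $\QQ$, using that reflectivity is a property of mirrors which is stable under such commensurability, up to controlling the root system.) Having arranged $M = 2U \oplus L$ with $L$ of rank $l-2 \geq 9$, Theorem \ref{MTH1} yields a rescaled root system $\mathcal{R}$ which, if nonempty, has rank exactly $l-2$ and bounds $L$ from above and below.

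Then the endgame is a finiteness/impossibility argument on $\mathcal{R}$ combined with the freeness constraints. First, $\mathcal{R}$ cannot be empty: an empty $\mathcal{R}$ means $F$ vanishes nowhere on $\cD(M)$, so $\cD(M)/\Gamma$ is a weighted projective space with no ramification divisor except possibly at infinity, which for $l > 10$ contradicts known positivity/obstruction results (e.g.\ the existence of cusp forms of small weight, or directly the classification in \cite{Wan18, Wan21}); alternatively, a nowhere-vanishing reflective form of the requisite weight does not exist on any lattice of signature $(l,2)$ with $l$ large. So $\mathcal{R}$ is a nonempty rank-$(l-2)$ rescaled root system satisfying the constraints of the full Theorem \ref{th:fake-root}. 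The additional freeness input — that the weights $k_i$ and the Jacobian weight are tightly linked, and that all of $F_0 \cdots F_l$ and the Jacobian are simultaneously reflective — pins down $\mathcal{R}$ so rigidly that it forces $l - 2 \leq 8$, i.e.\ $l \leq 10$, contradicting our assumption. Concretely, one checks case by case through the finite list of admissible $\mathcal{R}$ in rank $\geq 9$ (this list is already essentially the content of Theorem \ref{MTH2}(1), which rules out $l = 21$ and $l \geq 23$) that none of them is compatible with the Jacobian being a product of the $F_i$'s of the predicted weight with trivial-character-compatible multiplicities; the remaining sporadic ranks $11 \leq l \leq 20$ are eliminated by the same mirror-counting inequality that drives the proof of Theorem \ref{MTH2}.

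The main obstacle I anticipate is the reduction step: honestly deducing from "$A(\Gamma)$ is free" that a genuinely \emph{reflective} form exists on a lattice split by $2U$. The freeness gives a reflective form on the original $M$ for the original $\Gamma$ more or less for free via the Jacobian, but transporting this to the $2U \oplus L$ setting required by Theorem \ref{MTH1} — and keeping track of how the root system and the reflection subgroup change under the commensurability — is the delicate part. A secondary difficulty is making the "freeness constraints pin down $\mathcal{R}$" step effective rather than appealing vaguely to Theorem \ref{MTH2}: one wants the weights $k_i$ to interact with the structure of $\mathcal{R}$ (the rescalings, the Coxeter numbers of the components) so as to leave no admissible configuration with $l > 10$, and verifying this may require reproving a refined version of the rank bound with the extra Jacobian relation built in.
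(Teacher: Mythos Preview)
Your overall architecture is right: the Jacobian of a free generating set is reflective with simple zeros along mirrors of reflections in $\Gamma$ (this is exactly \cite[Theorem 3.5]{Wan21}), one transports to a lattice of type $2U\oplus L$ via Ma's lemma \cite[Corollary 3.2]{Ma18} and symmetrization, and then Theorem~\ref{th:fake-root} applies. The reduction step you flagged as the main obstacle is in fact handled cleanly in the paper by exactly the mechanism you suggest.

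The genuine gap is your endgame. You propose that ``freeness pins down $\mathcal{R}$ so rigidly that it forces $l-2\leq 8$'' and that ranks $11\leq l\leq 20$ are ``eliminated by the same mirror-counting inequality that drives Theorem~\ref{MTH2}.'' Neither of these works: Theorem~\ref{MTH2} says nothing about $11\leq l\leq 20$, and there is no intrinsic root-system bound forcing $\rank(\mathcal{R})\leq 8$. The paper's actual mechanism is a \emph{weight comparison}, which you do not mention. On one side, the Jacobian weight satisfies $k\geq l+(l/2-1)(l+1)$, since each of the $l+1$ generators has weight at least $l/2-1$. On the other side, one bounds from above the weight of the transported reflective form $F$ on $2U\oplus L$: analyzing the principal part of its input, one shows each $d_i\leq 2$ (else $\eta^8 f_i$ would be holomorphic of negative weight), then rules out $d_i=1$ via \cite{Wan23a} (the resulting weight $132$ or $12$ is already too small), so every $d_i=2$. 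Then $\eta^{12}f$ is holomorphic of weight $7-l/2$, forcing $l\leq 13$. Finally one lists the finitely many admissible fake root systems of rank $9,10,11$ with all $d_j=2$ and equal fake Coxeter numbers, computes that each gives $k\leq 9$ or $k=(6-n/2)h$, and checks this is far below the quadratic lower bound. Without this weight inequality you have no leverage in the range $11\leq l\leq 20$.
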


The new proof is based on a previous result of the author. It was proved in \cite{Wan21} that if the algebra of modular forms for $\Gamma$ is freely generated then the modular Jacobian of generators is a reflective modular form on $\Gamma$ with simple zeros. Applying Theorem \ref{MTH1} to the Jacobian leads to a contradiction when $l\geq 11$, thus proving Theorem \ref{MTH3}.   

In this paper we also prove the finiteness of lattices with reflective modular forms. The previous best result in this direction was attributed to Ma \cite[Corollary 1.10]{Ma18}, proving the finiteness of lattices which has a reflective modular form with simple zeros. 

\begin{theorem}\label{MTH4}
\noindent
\begin{enumerate}
    \item The set of even lattices of type $2U\oplus L$ with a reflective modular form is finite. 
    \item When $l\geq 7$, the set of even lattices of signature $(l,2)$ and type $U\oplus K$ which has a reflective Borcherds product vanishing on $r^\perp$ is finite, where $r\in U$ and $r^2=2$. 
    \item When $l\geq 14$, every reflective Borcherds product on an even lattice of type $U\oplus K$ and signature $(l,2)$ vanishes on $r^\perp$ for $r\in U$ with $r^2=2$. 
\end{enumerate}    
\end{theorem}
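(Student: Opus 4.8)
The plan is to derive all three parts from Theorem~\ref{th:fake-root} (the full version of Theorem~\ref{MTH1}) together with the obstruction theory of Borcherds products. For part (1), suppose $M = 2U \oplus L$ carries a reflective modular form. Taking the product of its translates reduces us to a reflective modular form for $\widetilde{\Orth}^+(M)$, which by Bruinier's result is a reflective Borcherds product, so Theorem~\ref{th:fake-root} produces the induced rescaled root system $\mathcal{R}$. If $\mathcal{R}\neq\emptyset$, the constraints in Theorem~\ref{th:fake-root} --- the rank identity $\rank\mathcal{R} = \rank L = l-2$, the admissible types and rescalings of the irreducible components, and the numerical invariants attached to them --- leave only finitely many $\mathcal{R}$, and for each of them the statement that $\mathcal{R}$ bounds $L$ from above and below sandwiches $L$ between a fixed root lattice and its dual, permitting only finitely many $L$. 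The case $\mathcal{R}=\emptyset$ I would treat separately, and expect it to be the main obstacle in (1): there the divisor of $F$ is supported on quadratic divisors $r^\perp$ with $r$ not orthogonal to $L$, the weakly holomorphic input of $F$ then has a bounded principal part, and pairing it with the Eisenstein series of weight $1+l/2$ bounds the weight of $F$ and hence the determinant of $M$, leaving finitely many $M$.

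For part (2), start from a reflective Borcherds product $F$ on $M = U\oplus K$ of signature $(l,2)$, $l\geq 7$, vanishing on $r^\perp$ for some $r\in U$ with $r^2=2$. The key step --- which I expect to be the crux --- is to use this vanishing together with the structure of the Borcherds product at the $0$-dimensional cusp attached to $U$ (or the quasi-pullback along a norm $-2$ vector in $r^\perp$) to show that $M$ must split a second hyperbolic plane, i.e.\ $M\cong 2U\oplus L$. Once this is established, part (1) applies and leaves only finitely many such $M$.

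For part (3), write a reflective Borcherds product $F$ on $M = U\oplus K$ of signature $(l,2)$ with $l\geq 14$ --- which we may assume holomorphic of weight $k>0$ --- as the lift of a weakly holomorphic input of weight $1-l/2$ whose principal Fourier coefficients $c_\gamma(n)$, $n<0$, are nonnegative; vanishing of $F$ on $r^\perp$ with $r\in U$, $r^2=2$ is equivalent to $c_0(-1)>0$. Pairing the input with the weight $1+l/2$ Eisenstein series for the dual Weil representation yields an identity of the shape $2k = c_0(0) = -\sum_{\gamma,\,n<0} c_\gamma(n)\,b_\gamma(-n)$, with $b_\gamma$ the Eisenstein coefficients. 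Feeding in the reflectivity restriction on which $c_\gamma(n)$ may be nonzero, together with a careful analysis of the signs and sizes of the $b_\gamma$ --- the delicate point here --- I expect to show that this identity cannot hold with $c_0(-1)=0$ once $l\geq 14$, which gives (3).
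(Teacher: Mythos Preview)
Your strategy diverges from the paper's at every step, and two of the three parts have genuine gaps.

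\textbf{Part (1).} The paper does not argue via the constraints of Theorem~\ref{th:fake-root}. Instead it invokes the link with hyperbolic reflection groups: a reflective modular form on $2U\oplus L$ has nonzero Weyl vector (Theorem~\ref{th:fake-root}(2)), so by Borcherds' criterion (equivalently, Looijenga's proof of part (i) of Conjecture~\ref{conj}) the hyperbolic lattice $U\oplus L$ is reflective; Nikulin's finiteness theorems for reflective hyperbolic lattices then give finitely many $L$. Your route---bounding $\mathcal{R}$ directly---does not obviously work: Theorem~\ref{th:fake-root} by itself does not bound the rescalings $d_j$ or the multiplicities. The case analyses in \S\ref{sec:classification} that do bound $d_j$ always feed in extra information (positivity of a specific weight, quasi-pullbacks, Lemma~\ref{lem:non-reflective}, singular-weight bounds), and no uniform argument is given. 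Your treatment of $\mathcal{R}=\emptyset$ is also incomplete: in that case the input has principal part $f(-1,0)q^{-1}e_0$ alone, and the Eisenstein pairing reduces to the single equation $b_0(1)=-24$, which does not by itself bound $\det M$.

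\textbf{Part (2).} Your ``key step''---that $M=U\oplus K$ must itself split a second $U$---is false in general: when $K'/K$ has length exceeding $\rank K-2$ this is impossible, and nothing in the hypotheses prevents large length. The paper instead passes to a suitable even \emph{overlattice}. Using \cite[Lemma~4.8]{Ma17} one finds $M_1\supset M$ with $l(M_1)\leq 4$ and $e(M_1)\in\{e(M),e(M)/2\}$; Lemma~\ref{lem:overlattice} pushes the reflective Borcherds product up to $M_1$; for $l\geq 7$ Nikulin's splitting criterion gives $M_1\cong 2U\oplus L$, and then part (1) bounds $e(M_1)$, hence $e(M)$, hence the set of $M$.

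\textbf{Part (3).} Your Eisenstein-pairing idea is in the right spirit but far more delicate than necessary. The paper's argument is a one-liner: by Lemma~\ref{lem:principal}, if $c(0,-1)=0$ then every principal exponent is $\geq -\tfrac12$, so $\eta(\tau)^{12}f(\tau)$ is a \emph{holomorphic} vector-valued modular form of weight $7-l/2$. For $l\geq 15$ this weight is negative, forcing $f=0$; for $l=14$ it would be an invariant of the Weil representation, contradicting $c(0,0)\neq 0$. Hence $c(0,-1)>0$ once $l\geq 14$.
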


It was proved by Borcherds \cite{Bor98}, Gritsenko--Nikulin \cite{GN98a}  and Looijenga \cite{Loo03} that a lattice with a reflective modular form may induce a reflective hyperbolic lattice.  We conclude the proof of  Theorem \ref{MTH4} from this result and the finiteness of reflective hyperbolic lattices, which was proved by Nikulin \cite{Nik80b, Nik96}. 

This paper is organized as follows. In Section \ref{sec:root-systems} we present a full version of Theorem \ref{MTH1} and give a proof. Section \ref{sec:classification} is devoted to the proof of Theorem \ref{MTH2}. In Section \ref{sec:non-free} we prove Theorem \ref{MTH3}. In Section \ref{sec:finiteness} we introduce reflective hyperbolic lattices and complete the proof of Theorem \ref{MTH4}.

\section{The full version of Theorem \ref{MTH1} and its proof}\label{sec:root-systems}
\subsection{The input of reflective Borcherds products}
Let $M$ be an even lattice of signature $(l,2)$ with $l\geq 3$. The input $f$ of a Borcherds product $F$ on $M$ is a weakly holomorphic modular form of weight $1-l/2$ for the Weil representation $\rho_M$. More precisely, $\rho_M$ is a unitary representation of a double cover of $\SL_2(\ZZ)$ on the group ring $\CC[M'/M]=\mathrm{span}(e_\gamma: \gamma \in M'/M)$ (see e.g. \cite{Bor98, Bru02} for details). The input $f$ is represented by a Fourier series of the form
$$
f(\tau) = \sum_{\gamma\in M'/M} \sum_{n\in \ZZ-\gamma^2/2}c(\gamma,n)q^n e_\gamma, \quad \tau \in \HH, \; q=e^{2\pi i \tau}. 
$$
Note that $c(\gamma,n)\in \ZZ$ if $n<0$ and there are only finitely many nonzero Fourier coefficients $c(\gamma,n) q^n e_\gamma$ with $n<0$.  Their sum is called the \textit{principal} part of $f$, which determines the zeros and poles of $F$. Let $\lambda$ be a primitive positive-norm vector of $M'$. The quadratic divisor $\lambda^\perp$ has multiplicity $\sum_{d=1}^\infty c(d\lambda, -d^2\lambda^2/2)$ in the divisor of $F$. The weight of $F$ is given by $c(0,0)/2$. 

We now describe the input of a reflective Borcherds product. Let $\lambda$ be a primitive positive-norm vector of $M'$. The quadratic divisor $\lambda^\perp$ is reflective, i.e. $\sigma_\lambda \in \Orth^+(M)$ if and only if there exists a positive integer $d$ such that $\lambda^2=2/d$ and $d\lambda \in M$. In fact, when $\lambda^\perp$ is reflective, the order of $\lambda$ in $M'/M$ is either $d$, or $d/2$ in which case $d/2$ is even.

\begin{lemma}\label{lem:principal}
Let $M=U\oplus K$ be an even lattice of signature $(l,2)$ with $l\geq 3$. Then the principal part of the input of a reflective Borcherds product on $M$ has the form
$$
\sum_{t=1}^\infty \sum_{\substack{x\in K'/K\\ \ord(x)=t}} c(x, -1/t)q^{-1/t}e_x + \sum_{t=1}^\infty \sum_{\substack{y\in K'/K\\ \ord(y)=2t}} c(y, -1/(4t))q^{-1/(4t)}e_y,
$$
where $c(x,-1/t)\geq 0$, $c(2y,-1/t) + c(y, -1/(4t)) \geq 0$ and $\ord(\gamma)$ denotes the order of $\gamma$ in $K'/K$. 
\end{lemma}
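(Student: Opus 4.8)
The plan is to extract, from the effective divisor of $F$, constraints on the negative Fourier coefficients $c(\gamma,n)$ of its input $f$, and then run a downward induction on $-n$. Two preliminary observations will be used repeatedly. First, since $M=U\oplus K$ with $U$ unimodular we have $M'=U\oplus K'$ and $M'/M\cong K'/K$; moreover, writing $U=\ZZ e\oplus\ZZ f$ with $(e,f)=1$ and $e^2=f^2=0$, for every $\gamma\in M'/M$ and every $m\in\QQ_{>0}$ with $-m\in\ZZ-\gamma^2/2$ the vector $\lambda:=e+(m-w^2/2)\,f+w$, where $w\in K'$ is any lift of $\gamma$, is a primitive vector of $M'$ with $\lambda+M=\gamma$ and $\lambda^2=2m$: primitivity is forced by the coefficient $1$ of $e$, and $m-w^2/2\in\ZZ$ is precisely the hypothesis on $m$. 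Second, since $F$ is a reflective modular form, the multiplicity $m_\lambda=\sum_{d\ge1}c(d\lambda,-d^2\lambda^2/2)$ of $\lambda^\perp$ in $\d(F)$ is a non-negative integer that vanishes unless $\lambda^\perp$ is reflective, and, since Fourier coefficients only depend on a discriminant class and a level, $m_\lambda$ depends only on $\gamma=\lambda+M$ and on $\lambda^2$.

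Next I would reformulate the reflectivity criterion recalled above the lemma: for a primitive $\lambda\in M'$ with $\lambda+M=\gamma$ and $\lambda^2=2m$, the divisor $\lambda^\perp$ is reflective if and only if either (i) $m=1/\ord(\gamma)$, or (ii) $\ord(\gamma)$ is even, say $\ord(\gamma)=2t$, and $m=1/(4t)$. The ``if'' direction is immediate from the criterion, since $\lambda^2=2/d$ with $d\lambda\in M$ for $d=\ord(\gamma)$ or $d=2\ord(\gamma)$; the ``only if'' direction is exactly the quoted assertion that a reflective $\lambda$ has order $d$ or $d/2$ in $M'/M$ (with $d/2$ even in the latter case). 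The two resulting families of pairs $(\gamma,-m)$ are precisely those occurring in the statement: type (i) gives the first sum, type (ii) the second.

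The core of the proof is the claim that $c(\gamma,-m)\neq0$ with $m>0$ forces $(\gamma,-m)$ to be of type (i) or (ii); I would prove it by strong induction on $m$, running from the largest value downward (there are only finitely many negative terms). Assume $(\gamma,-m)$ is of neither type. By the previous paragraph no primitive $\lambda\equiv\gamma$ of norm $2m$ has $\lambda^\perp$ reflective, so $m_\lambda=0$ for the vector $\lambda$ constructed above, whence $0=c(\gamma,-m)+\sum_{d\ge2}c(d\gamma,-d^2m)$; it suffices to show every summand with $d\ge2$ vanishes. Such a summand has strictly larger parameter $d^2m$, so by induction it is zero unless $(d\gamma,-d^2m)$ is of type (i) or (ii); but then, combining $\ord(d\gamma)=\ord(\gamma)/\gcd(d,\ord(\gamma))$, the criterion of the previous paragraph, and the estimate $\gcd(d,\ord(\gamma))\le d<d^2$ valid for $d\ge2$, a short case check shows that $(\gamma,-m)$ would itself have to be reflective, i.e.\ of type (i) or (ii) — contradicting our assumption. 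Hence the sum vanishes, so $c(\gamma,-m)=0$, contradicting the hypothesis. In particular the principal part of $f$ is supported precisely on the two families of slots in the statement.

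The sign conditions come from evaluating $m_\lambda\ge0$ at a reflective slot. For a type (i) slot take primitive $\lambda\equiv x$ with $\lambda^2=2/t$, $t=\ord(x)$: the same $\gcd$ estimate shows every $d\ge2$ term of $m_\lambda$ vanishes, so $c(x,-1/t)=m_\lambda\ge0$. For a type (ii) slot with $\ord(y)=2t$ take primitive $\lambda\equiv y$ with $\lambda^2=1/(2t)$: now the estimate kills every $d\ge3$ term, while the $d=1$ and $d=2$ terms are $c(y,-1/(4t))$ and $c(2y,-1/t)$ (note $\ord(2y)=t$), so $c(2y,-1/t)+c(y,-1/(4t))=m_\lambda\ge0$. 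The step I expect to be most delicate is the vanishing of the higher-multiple terms in the induction: it genuinely combines the elementary inequality $\gcd(d,\ord(\gamma))\le d<d^2$ with the lattice-theoretic restriction on the order of a reflective vector quoted before the lemma — the inequality alone does not, for instance, rule out configurations such as $\ord(\gamma)=4$ and $m=1/16$. A secondary point requiring care is the realization step, which really uses $U$ (rather than a rescaled copy $U(m)$) as an orthogonal summand, so that every discriminant class is hit by a primitive vector of prescribed positive norm.
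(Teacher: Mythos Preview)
Your argument is correct and runs along the same lines as the paper's: both construct a primitive representative of each discriminant class and prescribed norm using the $U$-summand, and then read off constraints from the reflectivity of the resulting divisor; the paper organizes the key step by passing at once to the \emph{largest} $d$ with $c(d\gamma,d^2n)\neq 0$ (so the multiplicity formula collapses to a single term and one obtains $d\in\{1,2\}$ directly from $(\gamma,md\gamma)\in\ZZ$), whereas you run the equivalent downward induction on $m$.

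One sharpening is worth recording. The extra input that actually closes your ``short case check'' --- and in particular disposes of your own test configuration $\ord(\gamma)=4$, $m=1/16$ --- is not the order criterion for reflective vectors that you point to in the last paragraph, but the Fourier support condition $2\,\ord(\gamma)\cdot m\in\ZZ$ (equivalently $(\gamma,\ord(\gamma)\gamma)\in\ZZ$), which you did state in your first paragraph. With $N=\ord(\gamma)$ and $g=\gcd(d,N)$: if $(d\gamma,-d^2m)$ is of type~(i) then $2Nm=2g/d^2\in\ZZ$ forces $d^2\mid 2g\le 2d$, hence $d=2$, $g=2$, $N$ even, and $m=1/(2N)$, so $(\gamma,-m)$ is of type~(ii); if it is of type~(ii) then $2Nm=g/d^2\in\ZZ$ forces $d^2\mid g\le d$, impossible for $d\ge 2$. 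This is exactly the computation the paper carries out in its own notation, so the two proofs coincide once the case check is written out.
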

\begin{proof}
Let $F$ denote the reflective Borcherds product and $f$ denote its input. We write a vector $\lambda\in U\oplus K'$ as $(a,v,b)$ with norm $\lambda^2=v^2-2ab$ for $a,b\in \ZZ$ and $v\in K'$. We expand $f$ as 
$$
f(\tau)=\sum_{\gamma\in K'/K}\sum_{n\in \ZZ - \gamma^2/2} c(\gamma,n) q^n e_\gamma.
$$
For $c(\gamma,n)\neq 0$ and $n<0$, let $d$ denote the largest integer such that $c(d\gamma,d^2n)\neq 0$. We view $\gamma$ as a vector in $K'$. Then $(d^2n+d^2\gamma^2/2, d\gamma, 1)^\perp$ has multiplicity $c(d\gamma,d^2n)$ in the divisor of $F$. Thus $c(d\gamma,d^2n)>0$ and there exists a positive integer $m$ such that $-d^2 n = 1/m$ and $md\gamma \in K$ because the divisor is reflective. We deduce from $(\gamma,md\gamma)\in \ZZ$ that $d=1$ or $2$. We see from $(\gamma,\ord(\gamma)\gamma)\in \ZZ$ that $d^2m | 2\cdot \ord(\gamma)$. Thus $\ord(\gamma)=m$ or $m/2$ if $d=1$, and $\ord(x)=2m$ if $d=2$. 

When $d=1$ and $\ord(\gamma)=m$, $\gamma$ is of type $x$ with $t=m$. 
When $d=1$ and $\ord(\gamma)=m/2$, we derive from $(m\gamma/2)^2 \in 2\ZZ$ that $m/2$ is even and thus $\gamma$ is of type $y$ with $t=m/4$. 
When $d=2$, $\gamma$ is of type $y$ with $t=m$. We have $c(\gamma,n)+c(2\gamma,4n)\geq 0$ because this integer is the multiplicity of $(n+\gamma^2/2,\gamma,1)^\perp$ in the divisor of $F$. Therefore, the principal part of $f$ has the desired form.     
\end{proof}

Let $M=2U\oplus L$, where $L$ is an even positive definite lattice of rank $\rank(L)$. By Bruinier's result \cite{Bru02}, every reflective modular form for $\widetilde{\Orth}^+(M)$ is essentially a Borcherds product. A Borcherds product $F$ on $M$ has a representation in terms of Jacobi forms, because its input $f(\tau)$ can be realized as a weakly holomorphic Jacobi form of weight $0$ and lattice index $L$ (see \cite[Theorem 4.2]{Gri18} or \cite[Section 4]{Wan19}). The Jacobi form input of $F$ can be expanded into Fourier series 
$$
\phi(\tau, \mathfrak{z}) = \sum_{n\in \ZZ,\; \ell \in L'} f(n,\ell)q^n \zeta^\ell, \quad \mathfrak{z}\in L\otimes\CC, \; \zeta^\ell = e^{2\pi i (\ell,\mathfrak{z})}. 
$$
The Fourier coefficient $c(x,m)q^m e_x$ of $f(\tau)$ corresponds to the Fourier coefficient $f(n,\ell)q^n\zeta^\ell$ of $\phi(\tau,\mathfrak{z})$, where $\ell$ is an arbitrary vector in $x + L$ and $n=m+\ell^2/2$. In particular, $c(x,m)=f(n,\ell)$ and $f(n,\ell)$ is integral if $2n<(\ell,\ell)$.

Suppose that the above $F$ is reflective.
We derive from Lemma \ref{lem:principal} that the Fourier expansion of $\phi$ satisfies
\begin{equation}\label{eq:reflective-Jacobi}
\phi(\tau, \mathfrak{z}) = f(-1,0)q^{-1} + \sum_{t=1}^\infty \sum_{\substack{r\in L'\\ r^2=2/t \\ \ord(r)=t}} f(0,r) \zeta^r + \sum_{t=1}^\infty \sum_{\substack{s\in L'\\ s^2=1/(2t) \\ \ord(s)=2t}} f(0,s) \zeta^s + f(0,0) + O(q).
\end{equation}
In the above expansion, $f(0,0)/2$ gives the weight of $F$, $f(-1,0)$ and $f(0,r)$ are non-negative integers, $f(0,s)$ are integers, and $f(0,s)+f(0,2s)\geq 0$. Furthermore, $f(0,r)=f(-1,0)$ if $r^2=2$. 

\subsection{Root systems} We review some basics of root systems and define fake root systems. 

Let $R$ be a usual irreducible root system of rank $\rank(R)$ with the normalized bilinear form $\latt{-,-}$ such that $\latt{\alpha,\alpha}=2$ for long roots $\alpha$ (When all roots of $R$ have the same length, we call them long roots.).  We denote by $\rho_l$ (resp. $\rho_s$) half the sum of all positive long (resp. short) roots of $R$. Then $\rho=\rho_l+\rho_s$ is the Weyl vector of $R$. We define
$$
h_l = \frac{1}{\rank(R)}\sum_\alpha \latt{\alpha,\alpha}, \quad h_s = \frac{1}{\rank(R)}\sum_\beta \latt{\beta,\beta},
$$
where $\alpha$ takes over all positive long roots and $\beta$ takes over all positive short roots. We have
$$
\sum_\alpha \latt{\alpha, \mathfrak{z}}^2 = h_l\latt{\mathfrak{z},\mathfrak{z}}, \quad \sum_\beta \latt{\beta, \mathfrak{z}}^2 = h_s\latt{\mathfrak{z},\mathfrak{z}}, \quad \mathfrak{z}\in R\otimes\CC.
$$
The number $h=h_l+h_s$ is called the dual Coxeter number of $R$. Let $|R_l|$ and $|R_s|$ denote the number of long roots and short roots, respectively. We set $|R|=|R_l|+|R_s|$. 

Let $d$ be a positive integer and $R(1/d)$ denote the rescaled root system with bilinear form $(-,-)=\latt{-,-}/d$. 
In the following, for each $R(1/d)$ we introduce an associated fake root system $\hat{R}$ with some invariants. The positive integers $a$ and $b$ are called the multiplicities of long roots $\alpha$ and short roots $\beta$, respectively.  The integer $c$ satisfying $a+c\geq 0$ is called the multiplicity of the fake root $\alpha/2$. 
We call $\hat{\rho}$ the fake Weyl vector, $\hat{h}$ the fake Coxeter number, and $|\hat{R}|$ the number of all fake roots (counting multiplicities). The motivation to define these notions can be found in the proof of Theorem \ref{th:fake-root}.

\begin{enumerate}
    \item $(A_n,a;d)$ for $n\geq 2$. $h=n+1$ and $|R|=nh$.
    $$
    \hat{\rho}:=a\rho, \quad  |\hat{R}|:=a|R|, \quad \hat{h}:=\frac{ah}{d}. 
    $$
    \item $(D_n,a;d)$ for $n\geq 4$. $h=2(n-1)$ and $|R|=nh$.
    $$
    \hat{\rho}:=a\rho, \quad  |\hat{R}|:=a|R|, \quad \hat{h}:=\frac{ah}{d}. 
    $$
    \item $(E_n,a;d)$ for $n=6,7,8$. The corresponding $h=12, 18, 30$ and $|R|=nh$. 
    $$
    \hat{\rho}:=a\rho, \quad  |\hat{R}|:=a|R|, \quad \hat{h}:=\frac{ah}{d}. 
    $$
    \item $(B_n,a,b;d)$ for $n\geq 3$. $h_l=2(n-1)$, $h_s=1$, $|R_l|=2n(n-1)$ and $|R_s|=2n$. 
    $$
    \hat{\rho}:=a\rho_l+b\rho_s, \quad  |\hat{R}|:=a|R_l|+b|R_s|, \quad \hat{h}:=\frac{ah_l+bh_s}{d}. 
    $$
    \item $(G_2,a,b;d)$. $h_l=3$, $h_s=1$, $|R_l|=6$ and $|R_s|=6$. 
    $$
    \hat{\rho}:=a\rho_l+b\rho_s, \quad  |\hat{R}|:=a|R_l|+b|R_s|, \quad \hat{h}:=\frac{ah_l+bh_s}{d}. 
    $$
    \item $(F_4,a,b;d)$. $h_l=6$, $h_s=3$, $|R_l|=24$ and $|R_s|=24$. 
    $$
    \hat{\rho}:=a\rho_l+b\rho_s, \quad  |\hat{R}|:=a|R_l|+b|R_s|, \quad \hat{h}:=\frac{ah_l+bh_s}{d}. 
    $$
    \item $(A_1,a|c;d)$ for $a+c\geq 0$. $h=2$ and $|R|=2$. 
    $$
    \hat{\rho}:=(a+c/2)\rho, \quad  |\hat{R}|:=(a+c)|R|, \quad \hat{h}:=\frac{ah}{d}+\frac{ch}{4d}. 
    $$
    \item $(C_n,a|c,b;d)$ for $n\geq 2$ and $a+c\geq 0$. $h_l=2$, $h_s=n-1$, $|R_l|=2n$ and $|R_s|=2n(n-1)$. 
    $$
    \hat{\rho}:=(a+c/2)\rho_l+b\rho_s, \quad  |\hat{R}|:=(a+c)|R_l|+b|R_s|, \quad \hat{h}:=\frac{ah_l+bh_s}{d}+\frac{ch_l}{4d}. 
    $$
\end{enumerate}

It is known that the denominator of the affine Lie algebra of type $A_1$ is given by the odd Jacobi theta function (see \cite{GSZ19})
$$
\vartheta(\tau,z)=q^{\frac{1}{8}}(\zeta^{\frac{1}{2}} - \zeta^{-\frac{1}{2}})\prod_{n=1}^\infty(1-q^n\zeta)(1-q^n\zeta^{-1})(1-q^n), \quad z\in \CC, \; \zeta= e^{2\pi iz}. 
$$
Using the Jacobi theta function and the Dedekind eta function  
$$
\eta(\tau)=q^{\frac{1}{24}}\prod_{n=1}^\infty (1-q^n),
$$
we define a weight-zero theta block associated with $\hat{R}$ as
\begin{equation}\label{eq:fake-denominator}
\vartheta_{\hat{R}}(\tau,\mathfrak{z})= \prod_{\alpha} \left(\frac{\vartheta(\tau, \latt{\alpha, \mathfrak{z}})}{\eta(\tau)} \right)^a \prod_{\beta} \left(\frac{\vartheta(\tau, \latt{\beta, \mathfrak{z}})}{\eta(\tau)} \right)^b \prod_{\alpha} \left(\frac{\vartheta(\tau, \latt{\alpha/2, \mathfrak{z}})}{\eta(\tau)} \right)^c,
\end{equation}
where $\mathfrak{z}\in R\otimes\CC$, $\alpha$ runs over all positive long roots and $\beta$ runs over all positive short roots. When $a=b=1$ and $c=0$, $\hat{R}$ reduces to the rescaled root system $R(1/d)$, and $\eta(\tau)^{\rank(R)}\vartheta_{\hat{R}}(\tau,\mathfrak{z})$ is identical to the product side of the denominator identity of the affine Lie algebra of type $R$. 

\subsection{The main result} We now state the full version of Theorem \ref{MTH1}.

\begin{theorem}\label{th:fake-root}
Let $M=2U\oplus L$ and $\widetilde{\Orth}^+(M) < \Gamma < \Orth^+(M)$. Let 
$F$ be a reflective Borcherds product of weight $k$ on $M$. Suppose that if $F$ vanishes on $r^\perp$ then $\sigma_r \in \Gamma$.  Let $\phi$ denote the Jacobi form input of $F$ with Fourier expansion \eqref{eq:reflective-Jacobi}. We introduce two sets
\begin{align*}
\mathcal{R} &= \{ r : f(0,r) > 0 \} \cup \{ s : f(0,s)>0, \; f(0,2s)=0 \},\\
\widehat{\mathcal{R}} &= \{ (\ell, f(0,\ell)) : 0\neq \ell \in L', \; f(0,\ell) \neq 0\}.
\end{align*}

If $\mathcal{R}$ is empty, then $k=12f(-1,0)$, the Weyl vector of $F$ has the form $(f(-1,0),0,0)$, and the first nonzero Fourier--Jacobi coefficient of $F$ is $\eta(\tau)^{2k}$. 

Assume that $\mathcal{R}$ is nonempty. Then $\mathcal{R}$ is a rescaled root system of rank $\rank(L)$. We decompose $\mathcal{R}$ into irreducible components as
$$
\mathcal{R} = R_1(1/d_1)\oplus \cdots \oplus R_m(1/d_m), 
$$
where $d_j$ are positive integers. Let $\hat{R}_j$ be the fake root system associated with $R_j(1/d_j)$ for which the multiplicity of each  root $\ell$ is given by $f(0,\ell)$. Then we have the isomorphism
$$
\widehat{\mathcal{R}} \cong \hat{R}_1 \oplus \cdots \oplus \hat{R}_m.
$$
\begin{enumerate}
    \item The invariants of $\hat{R}_j$ satisfy the following identities:
    \begin{equation}\label{eqA}
    C:=\frac{2k+\sum_{j=1}^m |\hat{R}_j|}{24} - f(-1,0) = \hat{h}_j, \quad 1\leq j \leq m. 
    \end{equation}
    When $f(-1,0)=0$, $d_j\geq 2$ for every $j$. When $f(-1,0)>0$, for any $\hat{R}_j$ with $d_j=1$, the multiplicity of long roots is $a_j=1$. 
    \item The Weyl vector of $F$ has the form $\rho_F =( C+f(-1,0), \; \sum_{j=1}^m \hat{\rho}_j,\; C )$. In particular, $\rho_F\neq 0$ and it has non-positive norm, i.e.
    \begin{equation}\label{eqB}
     \sum_{j=1}^m \frac{\latt{\hat{\rho}_j, \hat{\rho}_j}}{d_j} - 2C\Big(C+f(-1,0)\Big) \leq 0.  
    \end{equation}
    \item The rescaled lattice $L(C)$ is integral. The first nonzero Fourier--Jacobi coefficient of $F$ is 
    \begin{equation}\label{eqC}
     \vartheta_F(\tau,\mathfrak{z}) =  \eta(\tau)^{2k} \bigotimes_{j=1}^m \vartheta_{\hat{R}_j}(\tau, \mathfrak{z}_j), \quad \mathfrak{z}=(\mathfrak{z}_j)_{j=1}^m \in L\otimes\CC.    
    \end{equation}
    The function $\vartheta_F(\tau,\mathfrak{z})$ is a holomorphic Jacobi form of weight $k$ and lattice index $L(C)$. 
    \item Let $Q_j$ be the even lattice generated by long roots of $R_j$ and $P_j$ be the rational lattice
    $$
    \{ x \in R_j\otimes\QQ : \latt{x, v}\in \ZZ, \; v\in R_j \}.
    $$
    We denote by $J$ by the set of $j$ such that $R_j$ is of type $E_8$, $F_4$ or $G_2$, or $\hat{R}_j$ is of type $(A_1,a|c;d_j)$ or $(C_n,a|c,b;d_j)$ with $c\neq 0$. Then we have the bound
    $$
    L=T\oplus \bigoplus_{j\in J}Q_j(d_j), \quad 
    \bigoplus_{j\not\in J} Q_j(d_j) < T < \bigoplus_{j\not\in J} P_j(d_j). 
    $$
\end{enumerate}
\end{theorem}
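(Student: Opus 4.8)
The strategy is to exploit the modularity of the Borcherds product $F$ under the full group $\Gamma$, whose reflective part contains all reflections $\sigma_r$ with $f(0,r)>0$ or $f(0,s)>0$, and to combine this with the Fourier--Jacobi expansion. The starting point is that the first nonzero Fourier--Jacobi coefficient $\vartheta_F(\tau,\mathfrak{z})$ of $F$ is a holomorphic Jacobi form whose divisor is forced by the principal part of $\phi$: it must vanish along $\latt{\ell,\mathfrak z}\in\ZZ+\ZZ\tau$ for every $\ell$ with $f(0,\ell)\neq 0$, with the correct multiplicity. This pins down $\vartheta_F$ up to a $\tau$-dependent factor as a product of $\vartheta(\tau,\latt{\ell,\mathfrak z})$'s; matching the index (a quadratic form in $\mathfrak z$) forces the set $\mathcal R$ to be a rescaled root system — the key input here is that $\sum_{\ell}f(0,\ell)\latt{\ell,\mathfrak z}^2$ must be proportional to the index form on each irreducible piece, which is exactly the ``root system'' condition $\sum_\alpha\latt{\alpha,\mathfrak z}^2=h_l\latt{\mathfrak z,\mathfrak z}$, and one invokes the classification of finite reflection configurations with this property. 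This gives the decomposition $\mathcal R=\bigoplus R_j(1/d_j)$, the identification $\widehat{\mathcal R}\cong\bigoplus\hat R_j$, and formula \eqref{eqC} up to the power of $\eta$ and the scalar $C$.

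Next I would determine $C$ and prove \eqref{eqA}. The $\eta$-power and the constant $C$ are fixed by two constraints: the weight of $\vartheta_F$ must equal $k=f(0,0)/2$ (this gives the exponent $2k$ after accounting for the $\eta^{-\mathrm{rk}}$ in each $\vartheta_{\hat R_j}$), and the $q$-order of $\vartheta_F$ — i.e. the coefficient of $q^{-1}$, namely $f(-1,0)$ — must match. The $q$-expansion of $\vartheta(\tau,z)/\eta(\tau)$ starts at $q^0$, and of $\eta(\tau)$ at $q^{1/24}$, so the total $q$-order of the right side of \eqref{eqC} is $2k/24+\sum_j|\hat R_j|/24-(\text{something})$; setting this equal to $-f(-1,0)$ after a shift yields exactly \eqref{eqA} with $C=\hat h_j$. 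The crucial point, and the place where the reflection group $\Gamma$ enters decisively, is that $\vartheta_F$ must be modular under the (affine) Weyl group generated by the $\sigma_\ell$: invariance under each $\sigma_\alpha$ together with the elliptic transformation behavior forces the $q^0$-term of each $\vartheta_{\hat R_j}$-factor to be the Weyl denominator $\prod(1-e(\latt{\alpha,\mathfrak z}))^{a}\cdots$, and the ``lowest weight'' / Weyl-vector bookkeeping (Macdonald-type identity for $\eta^{\dim}\times$ affine denominator) forces $\hat h_j$ to be a single number $C$ independent of $j$ — any two components sharing the same $\tau$ must have the same leading $q$-power in their normalized theta block. The constraints on $d_j$ and $a_j$ when $f(-1,0)$ vanishes or not come from requiring $C=\hat h_j>0$ and integrality of $C$.

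For statements (2) and (3), the shape of the Weyl vector $\rho_F=(C+f(-1,0),\sum_j\hat\rho_j,C)$ follows from reading off the leading term of $F$ at the cusp: the Weyl vector of a Borcherds product is assembled from the Weyl vectors of the Jacobi-theta factors (the $\hat\rho_j$) plus the contribution of the $\eta$-powers and the $q^{-1}$-term, which supply the $U$-coordinates $C+f(-1,0)$ and $C$. That $\rho_F\ne 0$ is immediate from $C=\hat h_j>0$; that $\rho_F$ has non-positive norm is a general fact about Weyl vectors of holomorphic Borcherds products on $2U\oplus L$ (the cusp lies on the closure of $\cD(M)$), giving \eqref{eqB} after expanding $\rho_F^2\le 0$. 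Integrality of $L(C)$ and holomorphy of $\vartheta_F$ as a Jacobi form of index $L(C)$ follow because $C\cdot(-,-)$ must take integer values on $L$ for $\vartheta_F$ (a genuine Jacobi form) to exist, and holomorphy is inherited from $F$. Finally, statement (4) — the lattice bound — is obtained by analyzing which vectors $\ell$ of norm $2/t$ can occur: the reflectivity condition $\sigma_\ell\in\Gamma$ together with $\ell\in L'$ forces, for each component, the root lattice $Q_j$ rescaled by $d_j$ to sit inside $L$, while the requirement that \emph{all} roots $r$ with $\sigma_r\in\Gamma$ that produce a divisor of $F$ already lie in $\mathcal R$ forces $L$ not to contain extra copies of $P_j(d_j)$ beyond what the fake root system allows; the set $J$ collects precisely the cases ($E_8,F_4,G_2$, or $c\ne 0$) where the short roots or fake roots rigidify the embedding completely so that $Q_j(d_j)$ splits off as a direct summand.

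\emph{The main obstacle.} The hardest part is the rigidity argument that simultaneously (i) forces $\mathcal R$ to be an \emph{honest} rescaled root system rather than merely a finite set of reflection vectors, and (ii) pins the \emph{single} constant $C=\hat h_j$ across all components. Point (i) requires ruling out configurations where $\sum_\ell f(0,\ell)\latt{\ell,\mathfrak z}^2$ is proportional to the index form without the vectors forming a root system — this is where one must use that the $\sigma_\ell$ generate a \emph{finite} group (because the index lattice $L(C)$ is positive definite and $\vartheta_F$ is a genuine Jacobi form), so that the $\ell$'s are roots of a finite reflection group, followed by a case check against the list $A_n,D_n,E_n,B_n,C_n,F_4,G_2$ of which admit the proportionality; the fake-root phenomena ($A_1,a|c$ and $C_n,a|c,b$ with $c\ne0$) arise precisely when a root and its half both appear, and handling their interaction with the index form and with $\Gamma$ (where $\sigma_\ell$ and $\sigma_{2\ell}$ may or may not both lie in $\Gamma$) is the delicate bookkeeping. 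Point (ii) — that $\hat h_j$ is forced to be the same number for every $j$ — is the arithmetic heart: it comes from the fact that $F$ is a \emph{single} modular form, so the $q$-expansion of $\vartheta_F(\tau,\mathfrak z)$, read as a product over components, must have a well-defined leading $q$-power, and this leading power is $\sum_j(\text{something})/d_j\cdot(\dim\text{-type term})$ only if each $\vartheta_{\hat R_j}/\eta^{\mathrm{rk}\,R_j}$ contributes the \emph{same} minimal $q$-exponent up to the universal $\eta^{2k}$; otherwise the product would not be the $q^{-f(-1,0)}$-term of $F$. Making this precise requires the Macdonald--Kac $\eta$-identities identifying $\eta^{\dim\mathfrak g}\cdot(\text{affine denominator})$ with a theta series whose leading exponent is governed by $\hat h_j$, and then matching leading exponents.
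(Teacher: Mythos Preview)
Your overall architecture is reasonable, but there is a genuine gap in the core step, and a significant detour in another.

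\textbf{The gap: why $\mathcal{R}$ is a root system.} You claim that the proportionality $\sum_\ell f(0,\ell)(\ell,\mathfrak z)^2=2C(\mathfrak z,\mathfrak z)$, together with finiteness of the group generated by the $\sigma_\ell$, forces $\mathcal R$ to be a root system after a ``case check''. This does not work. The proportionality says the configuration is eutactic, but eutactic configurations need not be root systems (e.g.\ the union of $A_2$ with a rotated copy of $A_2$ in rank~$2$ is eutactic but not closed under its own reflections). And finiteness of the reflection group is automatic in a positive definite space, so it adds nothing: the issue is whether $\mathcal R$ is \emph{closed} under the reflections it generates, which is exactly what you have not established. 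The paper's proof is direct and uses the hypothesis you never invoke: for $v,u\in\mathcal R$ one sets $\lambda_v=(0,0,v,1,0)$, uses that $\sigma_{\lambda_v}\in\Gamma$ by assumption, computes $\sigma_{\lambda_v}(\lambda_u)=(0,0,\sigma_v(u),1-2(v,u)/(v,v),0)$, and applies the Eichler criterion to move this back to $(0,0,\sigma_v(u),1,0)$. Since the divisor of $F$ is $\Gamma$-invariant, this forces $f(0,\sigma_v(u))=f(0,u)$, hence $\sigma_v(u)\in\mathcal R$. Your attempt to route this through invariance of $\vartheta_F$ is obstructed by the fact that $\sigma_{\lambda_v}$ does \emph{not} act on the $L$-slot as $\sigma_v$ alone --- it moves the $U$-coordinates --- so the Eichler step is essential and cannot be bypassed by looking only at the leading Fourier--Jacobi coefficient.

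\textbf{The detour: $\hat h_j=C$ for all $j$.} You call this ``the arithmetic heart'' and reach for Macdonald--Kac identities and matching of leading $q$-exponents. In the paper it is a one-line consequence: the identity $\sum_\ell f(0,\ell)(\ell,\mathfrak z)^2=2C(\mathfrak z,\mathfrak z)$ (which is \cite[Proposition~2.6]{Gri18} applied to the weight-zero Jacobi form $\phi$) holds for all $\mathfrak z\in L\otimes\CC$; once $\mathcal R$ is known to be a root system with orthogonal decomposition $\bigoplus R_j(1/d_j)$, restricting $\mathfrak z$ to $R_j\otimes\CC$ gives $\hat h_j\cdot(\mathfrak z_j,\mathfrak z_j)=2C\cdot\frac12(\mathfrak z_j,\mathfrak z_j)$ on the nose, by the very definition of $\hat h_j$. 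No affine denominator identities are needed. Similarly, the Weyl vector formula and the theta-block shape of $\vartheta_F$ are quoted from \cite[Theorem~4.2]{Gri18} rather than reconstructed; your derivation via divisor-matching would work but is a longer path to results already available in the literature the paper cites.
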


\begin{proof}
Applying \cite[Proposition 2.6]{Gri18} to the Jacobi form input $\phi$ we obtain
\begin{align}
C:=\frac{1}{24}\sum_{\ell\in L'} f(0,\ell)  - f(-1,0) &= \frac{1}{2\rank(L)}\sum_{\ell \in L'} f(0,\ell)(\ell,\ell), \label{eq1}\\   
\sum_{\ell\in L'} f(0,\ell)(\ell,\mathfrak{z})^2 &= 2C(\mathfrak{z},\mathfrak{z}) \label{eq2}.
\end{align}
By \cite[Theorem 4.2]{Gri18}, the Weyl vector of $F$ has the form $(C+f(-1,0),\; \frac{1}{2}\sum_{\ell>0}f(0,\ell)\ell,\; C)$, and the first nonzero Fourier--Jacobi coefficient of $F$ is given by the theta block
\begin{equation}\label{eq3}
\vartheta_F(\tau,\mathfrak{z}) = \eta(\tau)^{2k}\prod_{\ell > 0} \left( \frac{\vartheta(\tau,(\ell,\mathfrak{z}))}{\eta(\tau)}\right)^{f(0,\ell)}. \end{equation}

If $\mathcal{R}$ is empty, i.e. $f(0,\ell)=0$ for all $\ell\neq 0$, then $C=0$ and $k=f(0,0)/2=12f(-1,0)$ by \eqref{eq1}. Furthermore, the Weyl vector and the leading Fourier--Jacobi coefficient have the desired form.

Assume that $\mathcal{R}$ is nonempty. We see from the Fourier expansion \eqref{eq:reflective-Jacobi} and
$$
C=\frac{1}{2\rank(L)} \left( \sum_{s} \big(4f(0,2s)+f(0,s)\big)(s,s) + \sum_{r\neq 2s} f(0,r)(r,r) \right)
$$
that $C>0$. Therefore, $\mathcal{R}$ generates $L\otimes\QQ$ over $\QQ$ by \eqref{eq2}.

We now prove that $\mathcal{R}$ is a rescaled root system. If $v \in \mathcal{R}$ then $-v \in \mathcal{R}$, because $\phi(\tau,\mathfrak{z})=\phi(\tau,-\mathfrak{z})$ and thus $f(n,\ell)=f(n,-\ell)$ for all $n\in\ZZ$ and $\ell\in L'$. The shape of \eqref{eq:reflective-Jacobi} yields that if $v\in \mathcal{R}$ then $mv\not\in\mathcal{R}$ for any $m>1$. We write a vector $\lambda \in 2U\oplus L'$ as $(n_1,n_2,\ell,n_3,n_4)$ for $n_1,n_2,n_3,n_4\in\ZZ$ and $\ell \in L'$ with $\lambda^2=\ell^2-2(n_1n_4+n_2n_3)$.  Let $v, u\in \mathcal{R}$. We define $\lambda_v=(0,0,v,1,0)$ and $\lambda_u=(0,0,u,1,0)$. Then $F$ vanishes on $\lambda_u^\perp$ with multiplicity $f(0,u)$. By assumption, $\sigma_{\lambda_v}$ lies in $\Gamma$ and fixes $M=2U\oplus L$. Thus $F$ also vanishes on the quadratic divisor orthogonal to $\sigma_{\lambda_v}(\lambda_u)$. From 
$$
\sigma_{\lambda_v}(\lambda_u) = (0,0,\sigma_v(u),1-2(v,u)/(v,v),0)
$$
we derive that $\sigma_{v}(u)\in L'$ and $2(v,u)/(v,v)\in \ZZ$. Notice that both $\lambda_{v,u}:=(0,0,\sigma_v(u),1,0)$ and $\sigma_{\lambda_v}(\lambda_u)$ are primitive in $M$. By the Eichler criterion (see e.g. \cite[Proposition 3.3]{GHS09}), there exists $g\in \widetilde{\Orth}^+(M)$ such that $g(\sigma_{\lambda_v}(\lambda_u))=\lambda_{v,u}$. It follows that $F$ vanishes on $\lambda_{v,u}^\perp$ with the same multiplicity as $\lambda_u^\perp$. Note that $(u,u)=(\sigma_v(u),\sigma_v(u))$ and $\ord(u)=\ord(\sigma_v(u))$. If $u$ is of type $r$, then $\sigma_v(u)$ is also of type $r$ and $\lambda_{v,u}^\perp$ has multiplicity $f(0,\sigma_v(u))>0$. If $u$ is of type $s$, then $\sigma_v(u)$ is also of type $s$. In this case, $F$ does not vanish on $(0,0,2u,1,0)^\perp$ and thus $F$ does not vanish on $(0,0,2\sigma_v(u),1,0)^\perp$. Therefore, $f(0,2\sigma_v(u))=0$ and $F$ vanishes on $\lambda_{v,u}^\perp$ with multiplicity $f(0,\sigma_v(u))>0$. It follows that $\sigma_v(u)\in \mathcal{R}$. We then prove that  $\mathcal{R}$ is a rescaled root system. Since $\mathcal{R}$ generates $L\otimes\QQ$, it has the same rank as $L$. 

We decompose $\mathcal{R}$ into irreducible components as in the theorem. By \eqref{eq:reflective-Jacobi}, for any nonzero $\ell \in L'$ with $f(0,\ell)\neq 0$, $\ell^2=2/t$ for some positive integer. Thus the scales $d_j$ are positive integers. We explain the invariants of fake root systems by cases.
\begin{enumerate}
    \item $R_j$ is of type $A_n$ for $n\geq 2$, $D_n$ for $n\geq 4$, $E_6$, $E_7$ or $E_8$.  Let $\alpha$ be a root of $R_j$. Since $R_j(1/d_j) \subset L'$, we can view $\alpha$ as a vector in $L'$ of norm $\alpha^2=2/d_j$ and $d_j\alpha \in L$. By the type of $R_j$, there exists a root $\alpha'$ such that $\latt{\alpha,\alpha'}=1$. Thus $\ord(\alpha)=d_j$ in $L'/L$ and $\alpha$ is primitive in $L'$. In particular, $\alpha \in L'$ is a vector of type $r$ and $\alpha/2 \not\in L'$. Note that $a=f(0,\alpha)$. 
    \item $R_j$ is of type $B_n$ for $n\geq 3$. Let $\alpha \in R_j$ be a long root and $\beta\in R_j$ be a short root. There exist a long root $\alpha'$ and a short root $\beta' \in R_j$ such that $\latt{\alpha, \alpha'}=\latt{\beta,\beta'}=1$. Similarly to (1), we show that $\alpha$ is a primitive vector in $L'$ with $\alpha^2=2/d_j$ and $\ord(\alpha)=d_j$. Thus $\alpha\in L'$ is a vector of type $r$ and $\alpha/2\not\in L'$. Note that a long root can be written as a sum of two distinct short roots. Under $R_j(1/d_j) \subset L'$, $\beta$ is a primitive vector in $L'$ with $\beta^2=1/d_j$ and $2d_j\beta\in L$.  There are two cases:
    \begin{enumerate}
        \item[(i)] $\ord(\beta)=2d_j$. In this case, $\beta\in L'$ is of type $r$ and $\beta/2 \not\in L'$. 
        \item[(ii)] $\ord(\beta)=d_j$. In this case, $d_j$ is even and $\beta\in L'$ is of type $s$. 
    \end{enumerate}
    Note that $a=f(0,\alpha)$ and $b=f(0,\beta)$. 
    \item $R_j$ is of type $F_4$. Similarly,  a long root $\alpha$ is a primitive vector of $L'$ with $\alpha^2=2/d_j$ and $\ord(\alpha)=2/d_j$, so $\alpha\in L'$ is of type $r$ and $\alpha/2\not\in L'$. A short root $\beta$ is a primitive vector of $L'$ with $\beta^2=1/d_j$ and $2d_j\beta\in L$. There exists a short root $\beta'$ such that $\latt{\beta,\beta'}=1/2$. Thus $\ord(\beta)=2d_j$. Therefore, $\beta\in L'$ is of type $r$ and $\beta/2 \not\in L'$.  Note that $a=f(0,\alpha)$ and $b=f(0,\beta)$. 
    \item $R_j$ is of type $G_2$. A long root $\alpha$ is a primitive vector of $L'$ with $\alpha^2=2/d_j$ and $\ord(\alpha)=2/d_j$, so $\alpha\in L'$ is of type $r$ and $\alpha/2\not\in L'$. A short root $\beta$ is a primitive vector of $L'$ with $\beta^2=2/3d_j$ and $3d_j\beta\in L$. There exists a short root $\beta'$ such that $\latt{\beta,\beta'}=1/3$. Thus $\ord(\beta)=3d_j$. Therefore, $\beta\in L'$ is of type $r$ and $\beta/2 \not\in L'$. Note that $a=f(0,\alpha)$ and $b=f(0,\beta)$. 
    \item $R_j$ is of type $A_1$. The unique positive root $\alpha$ is a vector of $L'$ with $\alpha^2=2/d_j$ and $d_j\alpha \in L$. Then $a=f(0,\alpha)$. There are two cases:
    \begin{enumerate}
        \item[(i)] $\ord(\alpha)=d_j$. In this case, $\alpha\in L'$ is of type $r$. When $\alpha/2\in L'$, the parameter $c$ introduced before occurs and $c=f(0,\alpha/2)$. When $\alpha/2\not\in L'$, $c=0$.
        \item[(ii)] $\ord(\alpha)=d_j/2$. In this case, $d_j/2$ is even, $\alpha\in L'$ is primitive and of type $s$, and $c=0$ because $\alpha/2\not\in L'$.
    \end{enumerate}
    \item $R_j$ is of type $C_n$ for $n\geq 2$. A short root $\beta$ is a vector of $L'$ with $\beta^2=1/d_j$ and $2d_j \beta \in L$. These exists another short root $\beta'$ such that $\latt{\beta,\beta'}=1/2$. Thus $\ord(\beta)=2d_j$ and $\beta$ is primitive in $L'$. Therefore, $\beta\in L'$ is of type $r$ and $\beta/2\not\in L'$. A long root $\alpha$ is a vector of $L'$ with $\alpha^2=2/d_j$ and $d_j\alpha\in L$. Note that $a=f(0,\alpha)$ and $b=f(0,\beta)$. There are two cases:
    \begin{enumerate}
        \item[(i)] $\ord(\alpha)=d_j$. In this case, $\alpha\in L'$ is of type $r$. When $\alpha/2\in L'$, the parameter $c$ introduced before occurs and $c=f(0,\alpha/2)$. When $\alpha/2\not\in L'$, $c=0$. 
        \item[(ii)] $\ord(\alpha)=d_j/2$. In this case, $d_j/2$ is even, $\alpha\in L'$ is primitive and of type $s$, and $c=0$ because $\alpha/2\not\in L'$.
    \end{enumerate}
\end{enumerate}

From the above discussions we see that $c\neq 0$ corresponds to vectors of type $s$ with $f(0,s)\neq 0$ and $f(0,2s)>0$. To prove Statement (1), we apply \eqref{eq2} to each $\hat{R}_j$ and apply \eqref{eq1} to $\hat{\mathcal{R}}$, and use the basic fact: in the divisor of $F$, $(0,-1,0,1,0)^\perp$ has multiplicity $f(-1,0)$ and $(0,0,v,1,0)^\perp$ for $v\in L$ with $v^2=2$ has multiplicity $f(0,v)$, and the set of these divisors is transitive under $\widetilde{\Orth}^+(M)$. 

Statement (2) follows from the general form of the Weyl vector and Statement (3) follows from \cite[Corollary 4.5]{Wan19}, \eqref{eq3} and \eqref{eq:fake-denominator}. 

We now prove the last statement. By construction, the lattice generated by $\mathcal{R}$ over $\ZZ$ is contained in $L'$, so its dual contains $L$. It implies that $L<\oplus_j P_j(d_j)$. For any $v \in \mathcal{R}$, if $v^2=2/d$ as a vector of $L'$, then $dv \in L$. Thus $L$ contains the lattice generated by these vector $dv$ over $\ZZ$, which is exactly $\oplus_j Q_j(d_j)$.  When $R_j$ is of type $E_8$, $F_4$ or $G_2$, we have $P_j=Q_j$. When $\hat{R}_j$ is of type $(A_1,a|c;d_j)$ or $(C_n,a|c,b;d_j)$ with $c\neq 0$, the lattice generated by half long roots $\alpha/2$ and short roots $\beta$ is contained in $L'$, so its dual contains $L$, which is actually $Q_j(d_j)$. We then prove the desired bound.
\end{proof}

We formulate lattices of types $P$ and $Q$ defined in Theorem \ref{th:fake-root} (4) in Table \ref{tab:data} for convenience.  

\begin{table}[ht]
\caption{Lattices $P$ and $Q$ associated with a root system $R$}
\label{tab:data}
\renewcommand\arraystretch{1.5}
\[
\begin{array}{|c|c|c|c|c|c|c|c|c|c|}
\hline 
R & A_n & B_n & C_n & D_n & E_6 & E_7 & E_8 & G_2 & F_4  \\ 
\hline 
Q & A_n & D_n & nA_1 & D_n & E_6 & E_7 & E_8 & A_2 & D_4 \\
\hline
P & A_n' & \ZZ^n & D_n'(2) & D_n' & E_6' & E_7' & E_8 & A_2 & D_4 \\
\hline
\end{array} 
\]
\end{table}

\begin{remark}
There are indeed reflective Borcherds products which do not satisfy the assumption in Theorem \ref{th:fake-root}. For example, there is a reflective Borcherds product $F_{24}$ of weight $24$ with simple zeros for $\Orth^+(2U\oplus D_4)$ (see \cite[Theorem 4.4]{Woi17}). The fake root system of $F_{24}$ is $(D_4,1;2)$ and the associated identity of type \eqref{eqA} is  $(24+2\times 24)/24 = 3$. We can decompose $F_{24}$ as a product of three reflective Borcherds products of weight $8$.  One of them has fake root system $(A_1,1|0;2)^4$. Let $F$ be one of the other two products. Then $F$ vanishes on some $\lambda^\perp$ such that $F$ is not modular under the associated reflection $\sigma_\lambda$.    
\end{remark} 

\begin{example}\label{example}
We give many examples of fake root systems of reflective Borcherds products.
\begin{enumerate}
    \item The Igusa cusp forms of weights $10$ and $35$ for $\Sp_4(\ZZ)$ can be realized as reflective Borcherds products for $\Orth^+(2U\oplus A_1)$ (see \cite{GN98b}). The fake root systems of the two forms and their product are respectively $(A_1,2|0;2)$, $(A_1,1|0;1)$ and $(A_1,1|2;1)$. 
    \item The form $D_{1/2}$ in \cite[Theorem 1.11]{GN98b} is a reflective Borcherds product of weight $1/2$ with simple zeros for $\Orth^+(2U\oplus A_1(36))$. Its associated fake root system is $(A_1,1|-1;36)$.
    \item Gritsenko \cite[Section 6.6]{Gri18} constructed a reflective Borcherds product of weight $42$ for $\Orth^+(2U\oplus 2E_8\oplus 2A_1)$, whose associated fake root system is $(E_8,1;1)^2\oplus (C_2,1|32,12;1)$. 
    \item Gritsenko \cite[Section 6.6]{Gri18} constructed a reflective Borcherds product of weight $48$ for $\Orth^+(2U\oplus 2E_8\oplus A_2)$, whose associated fake root system is $(E_8,1;1)^2\oplus (G_2,1,27;1)$. 
    \item There is a reflective Borcherds product of weight $75$ for $\Orth^+(2U\oplus 2E_8\oplus A_1)$ (see e.g. \cite[Section 6.6]{Gri18}), whose associated fake root system is $(E_8,1;1)^2\oplus (A_1,1|56;1)$. 
    \item Gritsenko \cite[Section 6.6]{Gri18} constructed a reflective Borcherds product of weight $54$ for $\Orth^+(2U\oplus 2E_8\oplus A_1(2))$, whose associated fake root system is $(E_8,1;1)^2\oplus (A_1,14|64;2)$. 
    \item Borcherds \cite{Bor00} constructed a reflective modular form $\Psi_{24}$ of weight $24$ on the lattice of genus $\II_{22,2}(2_{\II}^{-2})$. We consider lattice models
    $$
    2U\oplus D_{20} \cong 2U\oplus E_8\oplus D_{12} \cong 2U\oplus 2E_8\oplus D_4.
    $$
    The corresponding fake root systems are respectively 
    $$
    (B_{20}, 1,8;1), \quad (E_8,1;1)\oplus (B_{12},1,8;1), \quad (E_8,1;1)^2\oplus (F_4,1,8;1).
    $$
\end{enumerate}    
\end{example}

\section{A proof of Theorem \ref{MTH2}}\label{sec:classification}
In this section we use Theorem \ref{th:fake-root} to prove Theorem \ref{MTH2}. We first prove some useful lemmas. 

\begin{lemma}\label{lem:sym-bound}
Let $M=U\oplus K$ be an even lattice of signature $(l,2)$ with $l\geq 3$. If $M$ has a reflective Borcherds product for which the coefficient of $q^{-1}e_0$ in the input is zero, then $l\leq 13$.  
\end{lemma}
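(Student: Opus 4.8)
The plan is to pass to a cusp, read off a holomorphic theta block, and play its numerical invariants against the hypothesis on the input; the point is that vanishing of the $q^{-1}e_0$-coefficient forces every ``root'' involved to be short, which tightens all the resulting inequalities.

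First I would record two reductions. Since a reflective modular form is non-constant and holomorphic, $F$ has positive weight $k$ and all Fourier coefficients of its input with negative index are non-negative integers; reflectivity forces the coefficient of $q^n e_0$ to vanish for $n\le -2$, and the hypothesis does so for $n=-1$, so the $e_0$-component of the input has no principal part. Moreover every quadratic divisor in the zero divisor of $F$ is some $\lambda^\perp$ with $\lambda\in M'$ primitive, $\lambda^2=2/d$, $d\lambda\in M$; and $d\ge 2$ in every case, since $d=1$ would give $\lambda\in M$ with $\lambda^2=2$, and because $M=U\oplus K$ splits a hyperbolic plane Eichler's criterion \cite[Proposition 3.3]{GHS09} (as in the proof of Theorem~\ref{th:fake-root}) would put $\lambda^\perp$ in the $\widetilde{\Orth}^+(M)$-orbit of the divisor whose multiplicity is the coefficient of $q^{-1}e_0$ in the input, namely $0$, a contradiction. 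Hence $0<\lambda^2\le 1$ for every such $\lambda$.

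Next I would use that $M$, containing $U$ and with $K$ containing an isotropic vector, has a rank-two isotropic sublattice $\Pi$, hence a zero-dimensional cusp whose cusp lattice $L_0:=\Pi^\perp/\Pi$ is positive definite of rank $l-2$. By \cite{Gri18} (compare the proof of Theorem~\ref{th:fake-root}), the first nonzero Fourier--Jacobi coefficient of $F$ at this cusp is the theta block
\[
\vartheta_F(\tau,\mathfrak z)=\eta(\tau)^{2k}\prod_{\ell>0}\bigl(\vartheta(\tau,(\ell,\mathfrak z))/\eta(\tau)\bigr)^{c(\ell)},
\]
a holomorphic Jacobi form of weight $k$ and index $C>0$ on $L_0$, where $\ell$ runs over the relevant input vectors (each with $\ell^2\le 1$ by the previous step); and one has $\sum_{\ell>0}c(\ell)(\ell,\mathfrak z)^2=C(\mathfrak z,\mathfrak z)$ — hence $(l-2)\,C=\sum_{\ell>0}c(\ell)(\ell,\ell)$ — together with $24\,C=2k+\sum_{\ell\ne 0}c(\ell)$ and the non-positivity of the norm of the Weyl vector of $F$.

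Finally I would feed $\ell^2\le 1$ into the first two identities and combine with $k>0$ and the non-positivity of the Weyl vector to bound the rank $l-2$ of $L_0$. In the extremal configurations — those with many irreducible components of small rank, where the crude estimate is inconclusive because of ``half-root'' vectors occurring with multiplicity $c<0$ subject to $a+c\ge 0$ — one uses in addition that $\vartheta_F$, being holomorphic of weight $k\ge(l-2)/2$ on the rank-$(l-2)$ lattice $L_0$, is in the boundary case $k=(l-2)/2$ a linear combination of theta functions of $L_0(C)$, which pins $C$ down relative to $L_0(C)$ and excludes the remaining shapes (with the Borcherds obstruction eliminating the very last ones). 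Running this along the types $A_n,D_n,E_n,B_n,C_n,F_4,G_2,A_1$ exactly as in Theorem~\ref{th:fake-root}, now with every $d_j\ge 2$, should give $l-2\le 11$, i.e. $l\le 13$. The hard part will be precisely this case analysis: the inequalities from \cite{Gri18} and the non-positivity of the Weyl vector leave only a finite list of possible induced configurations, but ruling out the extremal ones seems to need the extra leverage of singular Jacobi forms and/or the obstruction principle, and that bookkeeping is where the real work lies; everything before it is routine once the theta block at the cusp is identified.
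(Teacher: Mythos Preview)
Your first reduction is exactly right: the vanishing of the $q^{-1}e_0$-coefficient, together with Lemma~\ref{lem:principal}, forces every nonzero singular term of the input $f$ to be of the form $q^{-1/t}e_x$ with $t\ge 2$ or $q^{-1/(4t)}e_y$ with $t\ge 1$; in particular all $q$-exponents in the principal part are $\ge -1/2$. But at this point you take a long detour, and the paper does not.

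The paper's proof is one line from here: since the principal part of $f$ involves only $q$-exponents $\ge -1/2$, the product $\eta(\tau)^{12}f(\tau)$ is a \emph{holomorphic} vector-valued modular form of weight $6+(1-l/2)=7-l/2$ for $\rho_M$. A nonzero such form has non-negative weight, so $l\le 14$; and $l=14$ is excluded because a weight-$0$ holomorphic form is an invariant of $\rho_M$, which forces the $e_0$-component of $f$ to be a multiple of $\eta^{-12}$ and hence to have $c(0,0)=0$, contradicting that $c(0,0)=2k>0$. That is the entire argument.

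Your proposed route through the Fourier--Jacobi expansion and a root-system case analysis is not wrong in spirit, but it has real problems as written. First, it invokes the machinery of Theorem~\ref{th:fake-root}, which needs $M=2U\oplus L$ and the hypothesis $\sigma_r\in\Gamma$; the lemma only assumes $M=U\oplus K$ and a reflective Borcherds product for $\widetilde{\Orth}^+(M)$, so both points require extra work (symmetrization, passing to a suitable model) that you do not supply. Second, and more seriously, you never carry out the case analysis you announce as ``the hard part''; saying that it ``should give $l-2\le 11$'' is not a proof. The paper's $\eta^{12}$ trick bypasses all of this: it uses only the shape of the principal part established in Lemma~\ref{lem:principal} and the trivial vanishing of holomorphic forms of negative weight, with no cusp, no theta block, and no root-system bookkeeping.
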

\begin{proof}
Let $f$ be the input of the assumed reflective Borcherds product. 
We conclude from Lemma \ref{lem:principal} that $\eta(\tau)^{12}f(\tau)$ is a holomorphic modular form of weight $7-l/2$ for the Weil representation $\rho_M$. It follows that $l\leq 14$. When $l=14$, $\eta^{12}f$ is of weight $0$, so it is a variant of $\rho_M$, which is impossible because the constant coefficient $c(0,0)$ is nonzero. Therefore, $l\leq 13$.     
\end{proof} 

\begin{lemma}\label{lem:full-group}
Let $M=U\oplus K$ be an even lattice of signature $(l,2)$ with $l\geq 15$. Suppose that $F$ is a reflective Borcherds product on $M$. Then $F$ is unique up to a power. In particular, $F$ is modular for $\Orth^+(M)$. 
\end{lemma}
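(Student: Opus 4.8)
The plan is to establish the apparently stronger statement that any two reflective Borcherds products on $M$ have inputs which are proportional (indeed positive integer multiples of a single weakly holomorphic form); both conclusions of the lemma then follow formally.

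So let $F_1,F_2$ be reflective Borcherds products on $M$ with inputs $f_1,f_2$ of weight $1-l/2$ for $\rho_M$. First I would isolate the coefficient $n_i$ of $q^{-1}e_0$ in $f_i$. By Lemma~\ref{lem:principal} it is a non-negative integer, and by Lemma~\ref{lem:sym-bound} it is nonzero because $l\geq 15>13$; hence $n_i\in\ZZ_{>0}$. The crucial structural point, again read off from the shape of the principal part in Lemma~\ref{lem:principal}, is that $n_iq^{-1}e_0$ is the \emph{only} term of exponent $-1$ in the principal part of $f_i$: a term $q^{-1/t}e_x$ with $\ord(x)=t$ has exponent $-1$ only for $t=1$, which forces $x=0$, and a term $q^{-1/(4t)}e_y$ never has exponent $-1$.

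Next I would form $g:=n_2f_1-n_1f_2$, a weakly holomorphic modular form of weight $1-l/2$ for $\rho_M$. By the previous paragraph the exponent-$(-1)$ parts of $n_2f_1$ and $n_1f_2$ coincide and cancel, so every pole of $g$ has order at most $q^{-1/2}$. Consequently $\eta^{12}g$ is holomorphic at the cusp (its Fourier expansion begins in exponent $\geq 1/2-1/2=0$), so it is a holomorphic vector-valued modular form of weight $7-l/2$ for the finite-image representation $\chi\otimes\rho_M$, where $\chi$ denotes the order-two character carried by $\eta^{12}$. Since $l\geq 15$ makes the weight $7-l/2$ strictly negative, this form must vanish identically, and since $\eta^{12}\not\equiv 0$ we obtain $g=0$, i.e. $n_2f_1=n_1f_2$. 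This is the same device used in Lemma~\ref{lem:sym-bound}, and it is exactly here that the hypothesis $l\geq 15$ (rather than merely $l\geq 14$) is needed: for $l=14$ the weight $7-l/2$ equals $0$ and $\eta^{12}g$ need not be zero.

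It remains to draw the conclusions from $n_2f_1=n_1f_2$. This proportionality of inputs is precisely the assertion that $F$ is unique up to a power: any two reflective Borcherds products on $M$ are, up to a nonzero constant, powers of a common one (dividing by $\gcd(n_1,n_2)$ and using that dividing an integral principal part by coprime integers keeps it integral realises $f_1,f_2$ as integer multiples of a common input). For the modularity statement, for any $g\in\Orth^+(M)$ the form $F|g$ is again a reflective Borcherds product on $M$ — the $\Orth(M)$-action on $\CC[M'/M]$ commutes with $\rho_M$, so $F|g$ is the Borcherds lift of a permuted input, and its divisor is that of $F$ with the root divisors permuted — so its input is proportional to $f$; since $F|g$ and $F$ have the same (positive) weight, comparing the $e_0$-coefficients of $q^0$ shows the two inputs coincide, whence $F|g$ is a constant multiple of $F$, i.e. $F$ is modular for $\Orth^+(M)$ with a suitable character. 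I expect the main obstacle to be the cancellation step of the third paragraph: it rests entirely on the structural fact that the exponent-$(-1)$ part of a reflective input is a multiple of $e_0$ alone, so that a single linear combination removes the whole order-$q^{-1}$ singularity and the comparatively weak factor $\eta^{12}$ already lands in negative weight. The remaining ingredients — manipulating Borcherds lifts and invoking the vanishing of holomorphic vector-valued modular forms of negative weight for a finite-image representation — are routine.
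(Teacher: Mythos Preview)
Your proof is correct and follows essentially the same approach as the paper: form an integer linear combination of the two inputs that kills the $q^{-1}e_0$ term, observe via Lemma~\ref{lem:principal} that $\eta^{12}$ times the result is holomorphic of negative weight $7-l/2$, and conclude it vanishes. You supply more detail than the paper (explicitly invoking Lemma~\ref{lem:sym-bound} to ensure the $q^{-1}e_0$ coefficients are nonzero, and spelling out the modularity under $\Orth^+(M)$), but the argument is the same.
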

\begin{proof}
Let $F$ and $F_1$ be two reflective Borcherds products on $M$. We denote their inputs by $f$ and $f_1$, respectively.  There exist nonzero integers $a$ and $b$ such that the coefficient of $q^{-1}e_0$ in $h:=af-bf_1$ is zero. It follows that $\eta^{12}h$ is a holomorphic modular form of weight $7-l/2$. Since the weight $7-l/2$ is negative when $l\geq 15$, we have $h=0$. Therefore, $F^b=F_1^a$. 
\end{proof}

\begin{lemma}\label{lem:overlattice}
Let $\lambda \in U$ satisfying $\lambda^2=2$. Suppose that $M=U\oplus K$ has a reflective Borcherds product vanishing on $\lambda^\perp$. Let $K_1$ be an even overlattice of $K$. Then $U\oplus K_1$ also has a reflective Borcherds product vanishing on $\lambda^\perp$.     
\end{lemma}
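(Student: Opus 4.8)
The plan is to transfer the input of $F$ to the overlattice $M_1:=U\oplus K_1$. Since $U$ is unimodular, $M'/M=K'/K$, and $H:=K_1/K$ is an isotropic subgroup of $K'/K$ with $M_1'/M_1\cong H^\perp/H$ and $M\subset M_1$. Let $f=\sum_{\gamma}f_\gamma(\tau)e_\gamma$ be the input of $F$, a weakly holomorphic modular form of weight $1-l/2$ for $\rho_M$. First I would apply the standard lowering map attached to $H$ (see e.g. \cite{Sch06}), setting
\[
f_1:=\sum_{\bar\gamma\in H^\perp/H}\Big(\sum_{h\in H}f_{\gamma+h}\Big)e_{\bar\gamma},
\]
where $\gamma$ is a representative of $\bar\gamma$ in $H^\perp$. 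This is a weakly holomorphic modular form of weight $1-l/2$ for $\rho_{M_1}$ whose negative Fourier coefficients are finite sums of those of $f$, hence integral; so its Borcherds lift $F_1$ is a Borcherds product on $M_1$.

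Next I would verify that $F_1$ vanishes on $\lambda^\perp$. Because $\lambda\in U\subset M$ is primitive with $\lambda^2=2$ and $d\lambda\in M$ for all $d\ge1$, the multiplicity of $\lambda^\perp$ in the divisor of $F$ equals $\sum_{d\ge1}c(0,-d^2)$; as the class $0$ has order $1$, Lemma \ref{lem:principal} forces $c(0,-d^2)=0$ for $d\ge2$, so this multiplicity is $c(0,-1)>0$. For the same reason $c(h,-1)=0$ for all $h\in H\setminus\{0\}$, so the $q^{-1}e_0$-coefficient of $f_1$ equals $\sum_{h\in H}c(h,-1)=c(0,-1)>0$; since $\lambda$ stays primitive in $M_1'$, the divisor of $F_1$ contains $\lambda^\perp$.

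The heart of the proof is the reflectivity of $F_1$. Let $\mu^\perp$, with $\mu\in M_1'$ primitive and $\mu^2>0$, occur in the divisor of $F_1$. Its multiplicity is $\sum_{d\ge1}\sum_{h\in H}c(d\mu+\widetilde h,\,-d^2\mu^2/2)$, the class being taken in $M'/M$ and $\widetilde h\in K_1$ a representative; hence some coefficient $c(d_0\mu+\widetilde h_0,\,-d_0^2\mu^2/2)$ is nonzero with $d_0\ge1$ and $-d_0^2\mu^2/2<0$. Writing $m$ for the order of the class $\gamma_0:=d_0\mu+\widetilde h_0$ in $M'/M$, Lemma \ref{lem:principal} gives $d_0^2\mu^2/2\in\{1/m,\,1/(2m)\}$, so $\mu^2=2/d'$ with $d'\in\{d_0^2m,\,2d_0^2m\}$. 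Moreover $m\gamma_0=0$ gives $m(d_0\mu+\widetilde h_0)\in M$, and $m\widetilde h_0\in M_1$, whence $md_0\mu\in M_1$; multiplying by $d_0$ or $2d_0$ yields $d'\mu\in M_1$. Since $\mu$ is primitive in $M_1'$, this shows $\sigma_\mu\in\Orth^+(M_1)$, i.e. $\mu^\perp$ is a reflective divisor of $M_1$, so $F_1$ vanishes only on reflective divisors.

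The step I expect to be the main obstacle is showing that $F_1$ has no poles, i.e. that the multiplicities above are all nonnegative, equivalently that the principal part of $f_1$ again has the shape of Lemma \ref{lem:principal}. I would derive this from the nonnegativity constraints in Lemma \ref{lem:principal} for $f$ together with the isotropy of $H$: for $h,h'\in H$ and $\gamma\in H^\perp$ one has $(\gamma+h)^2-(\gamma+h')^2=2(h-h',\gamma)+(h^2-h'^2)\in2\ZZ$, so the classes lying over a fixed class of $M_1'/M_1$ have pairwise even norm differences, and each negative ``half-root'' contribution to $f_1$ is compensated by the corresponding ``root'' contribution to the class of twice that vector. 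Once this bookkeeping is carried out, $F_1$ is a reflective Borcherds product on $U\oplus K_1$ vanishing on $\lambda^\perp$, as required.
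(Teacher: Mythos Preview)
Your proposal is correct and follows essentially the same route as the paper: apply the isotropic-subgroup averaging map (which the paper cites as \cite[Lemma~5.6]{Bru02}) to the input $f$, check that the $q^{-1}e_0$ coefficient survives so the lift is nonzero and vanishes on $\lambda^\perp$, verify reflectivity via an order argument, and then handle holomorphicity by the compensation $c(2y,-1/s)+c(y,-1/(4s))\ge 0$ from Lemma~\ref{lem:principal}. The paper carries out the last step by an explicit case split on whether $\ord(\gamma)$ in $K_1'/K_1$ equals $s$ or $s/2$ (equivalently $2s$ when writing $n=-1/(4s)$), which makes precise the ``compensation'' you sketch; your remark on norm parities is true but not the crux, so when you write this up you should replace it by that order-based case analysis.
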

\begin{proof}
Let $f$ denote the input of the assumed reflective Borcherds product $F$. We use notations in Lemma \ref{lem:principal}.
Note that $K<K_1<K_1'<K'$. By applying the operator in \cite[Lemma 5.6]{Bru02} to $f$, we obtain a weakly holomorphic modular form of the same weight for the Weil representation attached to $K_1$ as
$$
f|\uparrow_K^{K_1} = \sum_{\gamma \in K_1'/K_1} \sum_{n\in \ZZ-\gamma^2/2} \sum_{x \in (\gamma+K_1)/K} c(x,n)q^n e_\gamma \in M_{1-l/2}^!(\rho_{K_1}).
$$
We denote the Fourier coefficient of $q^n e_\gamma$ in $f|\uparrow_K^{K_1}$ by $c'(\gamma,n)$. Then $c'(0,-1)=c(0,-1)\neq 0$, which yields that $f|\uparrow_K^{K_1}$ is nonzero. Let $c'(\gamma,n)\neq 0$ with $n<0$. Then $n=-1/s$ for some positive integer $s$. For $x\in (\gamma+K_1)/K$ with $c(x,n)\neq 0$, we have $\ord(x)x\in K$ and thus $\ord(x)\gamma \in K_1$, so $s\gamma \in K_1$ by Lemma \ref{lem:principal}. It follows that the order of $\gamma$ in $K_1'/K_1$ is $s$ or $s/2$. Thus every divisor of the Borcherds product $\Borch(f|\uparrow_K^{K_1})$ is reflective. It remains to prove that $\Borch(f|\uparrow_K^{K_1})$ has no poles. 

If $c'(\gamma, -1/s)\neq 0$ and $\gamma$ is of order $s$ in $K_1'/K_1$, then 
$$
c'(\gamma, -1/s) = \sum_{\substack{x \in (\gamma+K_1)/K\\ \ord(x)=s}} c(x,-1/s) > 0.
$$

If $c'(\gamma, -1/(4s))\neq 0$ and $\gamma$ is of order $2s$ in $K_1'/K_1$, then 
$$
c'(\gamma, -1/(4s)) = \sum_{\substack{x \in (\gamma+K_1)/K\\ \ord(x)=4s}} c(x,-1/(4s)) + \sum_{\substack{y \in (\gamma+K_1)/K\\ \ord(y)=2s}} c(y,-1/(4s)). 
$$
In this case, we have 
$$
c'(2\gamma, -1/s) + c'(\gamma, -1/(4s)) \geq \sum_{\substack{y \in (\gamma+K_1)/K\\ \ord(y)=2s}} \Big( c(2y,-1/s) + c(y,-1/(4s)) \Big) \geq 0.  
$$
We have thus proved the lemma.
\end{proof}

\begin{remark}
We give two remarks about Lemma \ref{lem:overlattice}. 
\begin{enumerate}
\item If $F$ does not vanish on $\lambda^\perp$, then $\Borch(f|\uparrow_K^{K_1})$ may be zero, for example $K=U\oplus D_8$, $K_1=U\oplus E_8$ and $F$ is the unique reflective Borcherds product of weight $4$ on $2U\oplus D_8$. If $\Borch(f|\uparrow_K^{K_1})$ is nonzero, then it is a reflective Borcherds product on $U\oplus K_1$. 
\item Let $F$ be a reflective modular form for $\Orth^+(M)$. For an even overlattice $M_1$ of $M$, the product of $F|g$ for all $g \in \Orth^+(M_1) / \Orth^+(M)$ may be not reflective on $M_1$, because $\Orth^+(M)<\Orth^+(M_1)$ does not hold in general (see the example in (1)). 
\end{enumerate}
\end{remark}

\begin{theorem}\label{th:l=22}
Let $M=U\oplus K$ be an even lattice of signature $(22,2)$ which has a reflective Borcherds product. Then $M$ is isomorphic to $2U\oplus D_{20}$.    
\end{theorem}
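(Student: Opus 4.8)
We plan to reduce $M$ to the shape $2U\oplus L$, feed it into Theorem \ref{th:fake-root} with $\Gamma=\Orth^+(M)$, and then pin down the resulting (fake) root system by a finite analysis.

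First the reduction. The lattice $K$ in $M=U\oplus K$ is even, indefinite, of rank $22\geq 5$, so by Meyer's theorem it represents zero; a primitive isotropic vector then splits off a hyperbolic plane, giving $M\cong 2U\oplus L$ with $L$ even positive definite of rank $20$. Since $l=22>13$, Lemma \ref{lem:sym-bound} forces the coefficient of $q^{-1}e_0$ in the input of $F$ to be nonzero, i.e.\ $f(-1,0)>0$ in the notation of \eqref{eq:reflective-Jacobi}; and by Lemma \ref{lem:full-group} the form $F$ is, up to a power, the unique reflective Borcherds product on $M$ and is modular for $\Orth^+(M)$. As $F$ is reflective its zero divisor is supported on $r^\perp$ for roots $r\in M$, and every such reflection $\sigma_r$ lies in $\Orth^+(M)$, so the hypothesis of Theorem \ref{th:fake-root} holds for $\Gamma=\Orth^+(M)$.

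Now apply Theorem \ref{th:fake-root}. It produces the rescaled root system $\mathcal{R}$, which (when nonempty) has rank $\rank(L)=20$ and decomposes as $\mathcal{R}=\bigoplus_{j=1}^{m}R_j(1/d_j)$ with fake root systems $\hat R_j$. The identity \eqref{eqA} gives a common value $C=\tfrac{2k+\sum_j|\hat R_j|}{24}-f(-1,0)=\hat h_j$ for all $j$, and since $f(-1,0)>0$ every component with $d_j=1$ has long‑root multiplicity $a_j=1$. Combining this with the non‑positivity of the norm of the Weyl vector \eqref{eqB} and the Freudenthal--de Vries strange formula (in the version adapted to the invariants $\hat\rho_j,\hat h_j,|\hat R_j|$), together with $\sum_j\rank(R_j)=20$, bounds the weight $k$ (equivalently $C$) and the multiplicities $a_j,b_j,c_j$; hence only finitely many admissible root systems $\mathcal{R}$ survive, and for each of them Theorem \ref{th:fake-root}\,(4) together with Table \ref{tab:data} and the evenness of $L$ leaves only finitely many isometry classes for $L$. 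One then checks that the only surviving possibilities are those realised by $(B_{20},1,8;1)$ and its equivalent presentations $(E_8,1;1)\oplus(B_{12},1,8;1)$ and $(E_8,1;1)^2\oplus(F_4,1,8;1)$, all of which place $L$ in the genus of $D_{20}$, so that $M\cong 2U\oplus D_{20}$.

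The decisive point, and the main obstacle, is the last step: getting a sharp enough bound on the weight out of \eqref{eqA}--\eqref{eqB}, and then eliminating all rank‑$20$ fake root systems with equal fake Coxeter numbers except the three above. Particular care is needed for the degenerate branch in which $F$ has a single divisor orbit — this is the case $\mathcal{R}=\emptyset$ of Theorem \ref{th:fake-root}, but it also occurs when $\mathcal{R}$ is built only out of norm‑$2$ roots already contained in $L$, in which case the input of $F$ is a weakly holomorphic vector‑valued form of weight $-10$ for $\rho_M$ with principal part $f(-1,0)q^{-1}e_0$ — and this branch has to be excluded for $\rank(L)=20$ by a separate obstruction‑theoretic argument (respectively by a quasi‑pullback estimate along an $r^\perp$ with $r\in U$, $r^2=2$), since it does not follow from the numerical constraints of Theorem \ref{th:fake-root} alone.
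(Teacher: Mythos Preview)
Your reduction step contains a genuine error. Meyer's theorem gives you a primitive isotropic vector $e\in K$, but that alone does \emph{not} split off a hyperbolic plane: you would need $\operatorname{div}(e)=1$, i.e.\ some $f\in K$ with $(e,f)=1$, and this fails in general (think of $U(2)\oplus\cdots$). So the sentence ``a primitive isotropic vector then splits off a hyperbolic plane, giving $M\cong 2U\oplus L$'' is unjustified, and without it Theorem~\ref{th:fake-root} is not available. The paper avoids this by a two–stage argument: first it treats the case $l(M)\le 3$, where Nikulin's splitting criterion gives the much stronger decomposition $M\cong 2U\oplus 2E_8\oplus L_4$ with $\rank L_4=4$; then for $l(M)>3$ it passes to a maximal even overlattice via Lemma~\ref{lem:overlattice} (legitimate since $f(-1,0)>0$ by Lemma~\ref{lem:sym-bound}) and rules out intermediate index-$p$ overlattices case by case.

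Even if one grants $M\cong 2U\oplus L$, the second half of your proposal is not a proof but a wish list. You assert that \eqref{eqA}--\eqref{eqB} together with a strange-formula identity bound $k$ and the multiplicities and leave finitely many rank-$20$ fake root systems, and that ``one then checks'' only the $D_{20}$ possibilities survive; but no mechanism for producing these bounds or performing this check is given, and a direct rank-$20$ classification along these lines would be formidable. The paper's decomposition $2U\oplus 2E_8\oplus L_4$ is not cosmetic: since $f(-1,0)>0$ forces $f(0,r)=f(-1,0)$ on every $2$-root of $L$, the two $E_8$ summands are automatically present in $\mathcal{R}$ with $d=1$ and $a=1$, so \eqref{eqA} immediately pins the common fake Coxeter number to $30$, and only a rank-$4$ residual system $R\subset L_4'$ remains to analyse. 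With the additional input of \cite[Proposition~9.6]{Wan19} one gets $r_2=24$, $r_1=192\beta_0$, whence $R\in\{B_4,F_4\}$ and $L_4=D_4$ in either case. Your ``degenerate branch'' $\mathcal{R}=\emptyset$ is likewise a non-issue in the paper's set-up, since the $E_8$ roots already populate $\mathcal{R}$; in your set-up it is a real gap that you acknowledge but do not close.
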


\begin{proof}
We first assume that the discriminant group of $M$ has length (i.e. the minimal number of generators of $M'/M$) $l(M)\leq 3$. By Nikulin's result\cite{Nik80}, we can write $M=2U\oplus 2E_8\oplus L_4$, where $L_4$ is an even positive definite lattice of rank $4$. Let $F$ be a reflective Borcherds product on $M$. By Lemma \ref{lem:full-group}, $F$ is modular for $\Orth^+(M)$. By applying Theorem \ref{th:fake-root} to $F$, we find that the root system associated with $F$ is of type $2E_8\oplus R$, where $R$ is a root system of rank $4$ related to $L_4$. We use notations fixed in Theorem \ref{th:fake-root}. By \cite[Proposition 9.6]{Wan19}, the weight of $F$ is $24\beta_0$ and $R$ contains only roots $v$ with $v^2=2$ or $v^2=1$ in $L_4'$, where $\beta_0:=f(-1,0)$. Let $r_2$ denote the number of $2$-roots in $R$ and $r_1$ denote the number of $1$-roots in $R$ (counting multiplicity), i.e. 
$$
r_1 = \sum_{\ell \in L_4',\; \ell^2=1} f(0,\ell).
$$
We derive from \eqref{eq1} and \eqref{eq2} that 
$$
\frac{480\beta_0+\beta_0 r_2 + r_1 + 48\beta_0}{24} -\beta_0 = \frac{1}{40}\left( 480\beta_0 \times 2 + 2\beta_0 r_2 + r_1 \right) = 30\beta_0,
$$
which yields that $r_2=24$ and $r_1=192\beta_0$. Since $R$ contains exactly $24$ roots of norm $2$, it is easy to show that $R$ is $F_4$ or $B_4$. 

When $R=B_4$, we conclude from Part (4) of Theorem \ref{th:fake-root} that $D_4<L_4<\ZZ^4$ and thus $L_4=D_4$.  

When $R=F_4$, we conclude directly from Part (4) of Theorem \ref{th:fake-root} that $L_4=D_4$. 

We have thus proved that $M\cong 2U\oplus 2E_8\oplus D_4\cong 2U\oplus D_{20}$. Since $F$ is modular for the full orthogonal group, the associated root system is of type $2E_8\oplus F_4$. By \eqref{eqA} we have
$$
\frac{480\beta_0+24\beta_0+24b+48\beta_0}{24} - \beta_0 = 30\beta_0 = 6\beta_0 + 3b,
$$
which follows that $b=8\beta_0$. Thus the fake root system of $F$ is $(E_8,\beta_0;1)^2\oplus (F_4,\beta_0,8\beta_0;1)$, which coincides with Example \ref{example} (7) when $\beta_0=1$. 

We now consider the general case. Suppose that $M=U\oplus K$ has length $l(M)>3$. Let $K_1$ be a maximal even overlattice of $K$. By Lemma \ref{lem:sym-bound} and Lemma \ref{lem:overlattice}, $M_1:=U\oplus K_1$ also has a reflective Borcherds product. Note that $M_1$ is maximal and then $l(M_1)\leq 3$. We know from the above discussions that $M_1\cong 2U\oplus D_{20}$. Let $p$ be a prime factor of the order of the quotient group $M_1/M$. Then there exists an even overlattice $M_0$ of $M$ such that $M<M_0<M_1$ and $M_1/M_0\cong \ZZ/p\ZZ$. If $p$ is odd, then $M_0$ has length $l(M_0)\leq 2$, which yields that $M_0\cong 2U\oplus D_{20}$, a contradiction. If $p=2$, then either $M_0$ has length $l(M_0)\leq 3$ or $M_0'/M_0=(\ZZ/2\ZZ)^4$. In the former case, we have $M_0\cong 2U\oplus D_{20}$, a contradiction. In the latter case, $M_0$ is 2-elementary. By Nikulin's classification \cite[Theorem 3.6.2]{Nik80}, $M_0$ is isomorphic to $2U\oplus E_8\oplus D_8\oplus D_4$ or $2U\oplus E_8\oplus E_7\oplus A_1\oplus D_4$. We exclude the former case by \cite[Theorem 1.1]{Wan22} and the latter case by Theorem \ref{th:fake-root}, because the associated root system is of type $E_8\oplus E_7\oplus R$ and the Coxter numbers of $E_8$ and $E_7$ are distinct. Therefore, there is no lattice of type $U\oplus K$ with length $l(M)>3$ which has a reflective Borcherds product. We then complete the proof. 
\end{proof}

\begin{remark}\label{rem:l=22}
For $M=2U\oplus D_{20}$, through the identification 
$$
\Orth^+(M) = \Orth^+(M')=\Orth^+(M'(2)),
$$
we can view the Borcherds form $\Psi_{24}$ (see Example \ref{example} (7)) as a reflective modular form on the lattice $2U(2)\oplus D_{20}'(2)$. By Bruinier's result \cite[Theorem 1.4]{Bru14}, this form is a Borcherds product on $2U(2)\oplus D_{20}'(2)$. Thus Theorem \ref{th:l=22} does not hold when $M$ does not splits $U$. This phenomena occurs because if $M$ is not of type $U\oplus K$ then the principal part of the input of a reflective Borcherds product on $M$ may have Fourier coefficients corresponding to empty quadratic divisors, in particular, Lemma \ref{lem:principal} does not hold in this case. We also conclude that if an even lattice has a reflective modular form then its even overlattice may not have a reflective modular form, for example $2U(2)\oplus D_{20}'(2)$ and its even overlattice $2U\oplus D_{20}'(2)$.  
\end{remark}

\begin{theorem}\label{th:l=21}
There is no even lattice of signature $(21,2)$ with a reflective modular form.     
\end{theorem}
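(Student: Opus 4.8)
The plan is to reduce to a lattice $M=2U\oplus L$ with $\rank L=19$ and then feed the induced root system into Theorem \ref{th:fake-root}. After multiplying suitable translates as in the introduction one may assume the form is defined for $\widetilde{\Orth}^+(M)$; if $M$ has no vector of norm $2$ the coefficient of $q^{-1}e_0$ in its input vanishes, and Lemma \ref{lem:sym-bound} (applied to a $U\oplus K$ model) forces $l\le 13$, a contradiction, so $M$ has norm-$2$ vectors and, since $l=21>13$, that coefficient is nonzero, i.e. $f(-1,0)>0$. Using Bruinier's theorem and the standard reduction to maximal overlattices (Lemma \ref{lem:overlattice}, applicable because $f(-1,0)>0$), together with the fact that a maximal even lattice of signature $(21,2)$ has discriminant length at most $3$ and hence splits as $2U\oplus L$ with $L$ maximal even positive definite of rank $19$, it suffices to rule out a reflective Borcherds product $F$ on each lattice in the resulting short explicit list; by Lemma \ref{lem:full-group}, $F$ is then modular for $\Orth^+(M)$.

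Now apply Theorem \ref{th:fake-root}. The case $\mathcal R=\varnothing$ cannot occur, as its leading Fourier--Jacobi coefficient $\eta(\tau)^{2k}$ has index $0$ while a Borcherds product on $2U\oplus L$ with $L\neq 0$ has leading coefficient of index $L$; so $\mathcal R$ is a rescaled root system of rank $19$. Whenever $L$ has roots, the corresponding norm-$2$ component sits at scale $1$, so part (1) of Theorem \ref{th:fake-root} forces its long-root multiplicity to be $1$, hence $f(-1,0)=1$, and forces all components to share one fake Coxeter number $C$. This at once discards every $L$ whose roots span two ordinary subsystems of different Coxeter number, such as $E_8\oplus D_{11}$ or $2E_8\oplus A_3$; on the remaining reducible candidates, e.g. $2E_8\oplus A_1^{\,3}$, the only way to balance the fake Coxeter numbers is to make the half-root multiplicities of the $A_1$-factors large, and then the non-positive-norm inequality \eqref{eqB} for the Weyl vector fails. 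One is thus reduced to the irreducible root lattices $L\cong A_{19}$ and $L\cong D_{19}$ (any maximal rootless $L$ of discriminant length $\le 3$ being handled by the same scheme through parts (1)--(4)).

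For $L\cong A_{19}$ or $D_{19}$ one has $f(-1,0)=1$ and $\mathcal R$ consists only of norm-$2$ roots, so $F$ is a $2$-reflective modular form; but no even lattice of signature $(21,2)$ carries one --- by the classification of $2$-reflective modular forms developed in \cite{Wan19}, or equivalently by Borcherds' obstruction criterion, which shows that a weakly holomorphic form of weight $-19/2$ and principal part $q^{-1}e_0$ does not exist for $\rho_{A_{19}}$ or $\rho_{D_{19}}$ --- and this is the contradiction sought. The main obstacle is precisely this last step: the lattices $A_{19}$ and $D_{19}$ satisfy every purely numerical constraint of parts (1)--(4) of Theorem \ref{th:fake-root} (the Weyl vector even acquires strictly negative norm), so their elimination genuinely requires the theory of $2$-reflective forms or an explicit obstruction computation; the secondary difficulty is the bookkeeping for the reducible candidates, where parts (1)--(3) already pin things down to a handful of borderline $(A_1^{\,n})$-type fake root systems that must then be excluded by \eqref{eqB}.
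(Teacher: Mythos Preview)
Your reduction to the maximal case is sound, but the heart of the argument has a real gap. You assert that ``this at once discards every $L$ whose roots span two ordinary subsystems of different Coxeter number, such as $E_8\oplus D_{11}$ or $2E_8\oplus A_3$.'' This is not correct, because the root system $\mathcal{R}$ in Theorem \ref{th:fake-root} lives in $L'$, not in $L$: it is determined by the divisor of $F$, not by the $2$-roots of $L$. For $L=2E_8\oplus A_3\cong 2E_8\oplus D_3$, the dual $D_3'$ contains norm-$1$ vectors of order $2$, and $F$ may well vanish on the corresponding divisors. In that case the component of $\mathcal{R}$ over $A_3$ is $B_3$, with fake Coxeter number $(4a+b)/1=4a_0+b$; setting $b=26a_0$ matches the $E_8$ value $30a_0$ perfectly and yields positive weight $k=48a_0$. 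So \eqref{eqA} alone does \emph{not} eliminate $2E_8\oplus A_3$ --- the paper needs the separate obstruction computation of Lemma \ref{lem:non-reflective} for exactly this reason. The same phenomenon (simply-laced root system of $L$ enlarging to $B_n$, $C_n$, $F_4$ or $G_2$ inside $L'$) undermines your treatment of the other reducible cases, and also your claim that for $L=D_{19}$ the form $F$ is automatically $2$-reflective: $\mathcal{R}$ could be $B_{19}$.

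There is also a structural shortcut you are missing. The paper does not attempt to enumerate maximal rank-$19$ lattices directly; instead it invokes \cite[Lemma 2.2]{Wan23a} to write any maximal $M$ of signature $(21,2)$ as $2U\oplus 2E_8\oplus L_3$ with $\rank L_3=3$. This forces two $(E_8,a_0;1)$ components in $\mathcal{R}$ and reduces the entire problem to the eight possible rank-$3$ root systems $R$ for the last factor. Each case is then dispatched by a combination of \eqref{eqA}, the bound in part (4), and a short list of explicit obstructions (Lemmas \ref{lem:non-reflective} and \ref{lem:L2}). Your proposal, by working at rank $19$ without this splitting, both multiplies the casework enormously and, as above, misapplies the Coxeter-number criterion.
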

\begin{proof}
We first prove that there is no maximal even lattice of signature $(21,2)$ with a reflective modular form.  Suppose that $M$ is a maximal even lattice with a reflective modular form $F$ of weight $k$ for $\Orth^+(M)$.  By \cite[Lemma 2.2]{Wan23a}, we can represent $M=2U\oplus 2E_8\oplus L_3$ for some maximal even positive definite lattice of rank $3$.  Then $F$ is a reflective Borcherds product and we can apply Theorem \ref{th:fake-root} to $F$. Clearly, the root system associated with $F$ is of type $2E_8\oplus R$, where $R$ is a root system of rank $3$ contained in $L_3'$. There are eight possibilities of $R$. We discuss by cases. We set $a_0=f(-1,0)$ for simplicity.
\begin{enumerate}
\item $R=A_3$. By \eqref{eqA} we have 
$$
\frac{480a_0+12a+2k}{24} - a_0 = 30a_0 = \frac{4a}{d},
$$
which yields that 
$$
a=15a_0d/2 \quad \text{and} \quad k=a_0(132-45d). 
$$
Since $k>0$, $d=1$ or $2$. When $d=1$, $a=15a_0/2\neq a_0$, a contradiction. When $d=2$, by Theorem \ref{th:fake-root} (4) we have the bound $A_3(2)<L_3<A_3'(2)$, which yields that $L_3=3A_1$ because $L_3$ is maximal. This leads to a contradiction by Lemma \ref{lem:non-reflective}.

\item $R=B_3$. By \eqref{eqA} we have 
$$
\frac{480a_0+12a+6b+2k}{24} - a_0 = 30a_0 = \frac{4a+b}{d},
$$
which yields that
$$
4a+b=30a_0d \quad \text{and} \quad 6a+3b+k=132a_0. 
$$
We deduce from $k+3b/2=(132-45d)a_0>0$ that $d=1$ or $2$. 

When $d=1$, by Theorem \ref{th:fake-root} we have $A_3<L_3<\ZZ^3$, which forces that $L_3=A_3$, i.e. $M=2U\oplus 2E_8\oplus A_3$. This leads to a contradiction by Lemma \ref{lem:non-reflective}.

When $d=2$, we have the bound $A_3(2)<L_3<3A_1$. Since $L_3$ is maximal, $L_3=3A_1$ and thus $M=2U\oplus 2E_8\oplus 3A_1$, a contradiction by Lemma \ref{lem:non-reflective}. 

\item $R=C_3$. When $c\neq 0$, we have $L_3=3A_1(d)$. We know from \cite[Theorem 8.1] {Wan19} that $2U\oplus 2E_8\oplus A_1(m)$ has a reflective modular form if and only if $m=1$ or $2$. Then \cite[Lemma 5.2]{Wan19} yields that $d=1$ or $2$. When $d=2$, $L_3$ is not maximal, a contradiction. When $d=1$, $M=2U\oplus 2E_8\oplus 3A_1$, a contradiction by Lemma \ref{lem:non-reflective}.  

We now assume that $c=0$. By \eqref{eqA} we have
$$
\frac{480a_0+6a+12b+2k}{24}-a_0 = 30a_0 = \frac{2a+2b}{d},
$$
which yields that 
$$
a+b=15a_0d \quad \text{and} \quad a+2b+k/3=44a_0.
$$
We derive from $b+k/3=(44-15d)a_0>0$ that $d=1$ or $2$. 

When $d=1$, we have the bound $3A_1<L_3<A_3'(2)$, therefore, $L_3=3A_1$ and $M=2U\oplus 2E_8\oplus 3A_1$, a contradiction.

When $d=2$, we have the bound $3A_1(2)<L_3<A_3'(4)$, which forces that $L_3=3A_1(2)$. This contradicts the fact that $L_3$ is maximal.  

\item $R=A_1\oplus A_2$, $A_1\oplus C_2$, $A_1\oplus G_2$ or $3A_1$ with $c\neq 0$ for $A_1$ or $C_2$.  We see from Theorem \ref{th:fake-root} that $L_3$ can be expressed as $A_1(d)\oplus L_2$. By \cite[Lemma 5.2]{Wan19}, $2U\oplus 2E_8\oplus L_2$ has a reflective modular form. We then conclude from Lemma \ref{lem:L2} that $L_3=A_1(d)\oplus A_2$ or $A_1(d)\oplus 2A_1$. As at the beginning of Case (3), \cite[Theorem 8.1, Lemma 5.2]{Wan19} yields that $d=1$ or $2$. This contradicts Lemma \ref{lem:non-reflective}. 

\item $R=A_1\oplus A_2$ with $c=0$. By \eqref{eqA} we find
$$
\frac{480a_0+2a_1+6a_2+2k}{24}-a_0=30a_0=\frac{2a_1}{d_1}=\frac{3a_2}{d_2},
$$
which follows that
$$
a_1+3a_2+k=132a_0 \quad \text{and} \quad a_1=15a_0d_1 \quad \text{and} \quad a_2=10a_0d_2.
$$
Clearly, $d_1>1$ and $d_2>1$.
We further conclude from Theorem \ref{th:fake-root} (3) that $k\geq 8a_0+a_1/2+a_2$, more precisely, this bound follows from the singular weights of holomorphic Jacobi forms of indices $E_8$, $A_1$ and $A_2$. There is no $(d_1,d_2)$ satisfying these constrains. 

\item $R=A_1\oplus C_2$. By \eqref{eqA} we have
$$
\frac{480a_0+2a_1+4a_2+4b_2+2k}{24} - a_0 =30a_0 = \frac{2a_1}{d_1} = \frac{2a_2+b_2}{d_2},
$$
which yields that 
$$
a_1+2a_2+2b_2+k=132a_0 \quad \text{and} \quad a_1=15a_0d_1 \quad \text{and} \quad 2a_2+b_2=30a_0d_2. 
$$
Clearly, $d_1>1$.
We see from Theorem \ref{th:fake-root} (3) that $k> 8a_0+a_1/2$. When $d_2=1$, $a_2=a_0$ and thus $b_2=28a_0$. Notice that $k+b_2=132a_0-15a_0d_1-30a_0d_2$. These constrains yield only two solutions $(d_1,d_2)=(2,1)$ and $(d_1,d_2)=(2,2)$. Recall that $L_3$ is maximal. We see form the bound $A_1(d_1)\oplus 2A_1(d_2) < L_3 < \ZZ(d_1/2)\oplus \ZZ^2(d_2)$ that $L_3$ has to be $A_3$. This leads to a contradiction by Lemma \ref{lem:non-reflective}.

\item $R=A_1\oplus G_2$. By Theorem \ref{th:fake-root} (4), $L_3$ can be represented as $A_1(m)\oplus A_2(d)$. Lemma \ref{lem:L2} further yields that $L_3=A_1\oplus A_2$ or $A_1(2)\oplus A_2$, a contradiction by Lemma \ref{lem:non-reflective}.

\item $R=3A_1$. By \eqref{eqA} we obtain
$$
\frac{480a_0+2a_1+2a_2+2a_3+2k}{24} - a_0 = 30a_0 = \frac{2a_1}{d_1} = \frac{2a_2}{d_2} =\frac{2a_3}{d_3},
$$
which yields that
$$
a_1+a_2+a_3+k=132a_0 \quad \text{and} \quad a_i=15a_0d_i, \; i=1,2,3.
$$
Thus $d_i>1$. By Theorem \ref{th:fake-root} (3), $k\geq 8a_0+(a_1+a_2+a_3)/2$. There is no $(d_1,d_2,d_3)$ satisfying these constrains. 
\end{enumerate}
We now consider the general case. Suppose that $M$ is an even lattice of signature $(21,2)$ with a reflective modular form $F$ for some finite-index subgroup $\Gamma < \Orth^+(M)$. By \cite[Corollory 3.2]{Ma18}, there exists a lattice $M_1$ on $M\otimes\QQ$ such that $\Orth^+(M) \subset \Orth^+(M_1)$ and that $M_1$ is a scaling of an even lattice of type $2U\oplus L$. Then the symmetrization 
$
\prod_{g \in \Orth^+(2U\oplus L) / \Gamma} F|g
$
defines a reflective modular form for $\Orth^+(2U\oplus L)$. Let $M_2$ be a maximal even overlattice of $2U\oplus L$. By Lemmas \ref{lem:sym-bound} and \ref{lem:overlattice}, $M_2$ also has a reflective modular form. This contradicts the above particular case. We thus prove the theorem. 
\end{proof}

\begin{lemma}\label{lem:non-reflective}
There is no reflective Borcherds product on $2U\oplus 2E_8\oplus L$ when $L$ takes one of the following lattices:
$$
3A_1,\quad A_3, \quad A_1\oplus A_2, \quad A_1(2)\oplus A_2, \quad A_2(2), \quad A_2(3), \quad 2A_1(2), \quad A_1\oplus A_1(2).
$$
\end{lemma}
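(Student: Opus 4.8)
The plan is to fix $L$ among the eight lattices, assume $F$ is a reflective Borcherds product on $M=2U\oplus 2E_8\oplus L$, and apply Theorem~\ref{th:fake-root} with the full orthogonal group. Since $\rank(L)\le 3$ we have $15\le l=16+\rank(L)\le 19$, so by Lemma~\ref{lem:full-group} $F$ is, up to a power, the unique reflective Borcherds product on $M$ and is modular for $\Orth^+(M)$; hence every reflection $\sigma_r$ in a root $r$ of $M$ lies in $\Gamma=\Orth^+(M)$ and Theorem~\ref{th:fake-root} yields a rescaled root system $\mathcal R$ of rank $\rank(2E_8\oplus L)$ together with the identities \eqref{eqA}, \eqref{eqB}, \eqref{eqC} and the lattice bounds. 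As in the proofs of Theorems~\ref{th:l=22} and~\ref{th:l=21}, $\mathcal R$ is nonempty (otherwise the first Fourier--Jacobi coefficient $\eta(\tau)^{2k}$ would be a Jacobi form of positive-rank index independent of the elliptic variable, which is impossible), and since a rescaled root of $\mathcal R$ has norm $2/d\le 2$ it lies either in $2E_8$, then of norm $2$, or in $L'$; because the primitive norm-$2$ vectors of $M$ with trivial discriminant image form one $\widetilde{\Orth}^+(M)$-orbit by the Eichler criterion and $\mathcal R$ must span $(2E_8\oplus L)\otimes\QQ$, $F$ vanishes on all of them with common multiplicity $f(-1,0)$ and $\mathcal R=E_8\oplus E_8\oplus R$, with the two $E_8$'s at scale $1$ and $R$ a rescaled root system of rank $\rank(L)$ with each component inside $L'$. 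Now Theorem~\ref{th:fake-root}(1) forces the long-root multiplicity of each scale-$1$ component to be $1$; for the $E_8$'s this multiplicity equals $f(-1,0)$, so $f(-1,0)=1$, $C=\hat h_{E_8}=30$, identity \eqref{eqA} becomes $k=132-\tfrac12\sum_j|\hat R_j|$ where the sum is over the components of $R$, and every component $R_j$ of $R$ satisfies $\hat h_j=30$.

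Second, I would enumerate, for each of the eight lattices $L$, the finitely many rescaled root systems $R$ that can occur. Reading the lattice bound of Theorem~\ref{th:fake-root}(4) off the $L$-summand, $L=T\oplus\bigoplus_{j\in J}Q_j(d_j)$ with $\bigoplus_{j\notin J}Q_j(d_j)<T<\bigoplus_{j\notin J}P_j(d_j)$ (using Table~\ref{tab:data}), together with $\rank(R)=\rank(L)$ and the fact that the covolume ratios of the inclusions have to be positive integers, leaves only a short explicit list of pairs $(R,\{d_j\})$ in each case. Moreover a component with $d_j=1$ is pinned down by $\hat h_j=30$: for a simply-laced $R_j$ this forces the dual Coxeter number of $R_j$ to be $30$, impossible in rank $\le 3$ (the smallest candidates being $E_8$, $D_{16}$, $A_{29}$); for the other types it becomes $h_l+b_jh_s=30$ with long-root multiplicity $1$; and a half-root multiplicity $c_j\neq 0$ can arise only when the corresponding $Q_j(d_j)$ is a direct summand of $L$.

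Third, for each candidate on the list I would derive a contradiction by the same bookkeeping as in the proof of Theorem~\ref{th:l=21}: the equations $\hat h_j=30$ express $a_j,b_j,c_j$ linearly in $d_j$, and substituting into $k=132-\tfrac12\sum_j|\hat R_j|$ together with the positivity $k>0$, the inequality \eqref{eqB} for the Weyl vector, and the singular-weight lower bound for the holomorphic Jacobi form $\vartheta_F$ of \eqref{eqC} (weight at least half the rank of its index, refined by the theta-block shape) leaves no admissible solution in positive integers. Most of these are exactly the sub-cases already disposed of in the proof of Theorem~\ref{th:l=21}. For the handful of candidates that the numerical identities alone do not exclude --- for instance $L=A_3$ with $R=B_3$ at scale $1$, or $L=A_1\oplus A_2$ with an $A_1$-component carrying a nonzero half-root multiplicity --- I would exclude them by Borcherds' obstruction principle: the principal part of the Borcherds input is then completely determined by Lemma~\ref{lem:principal} and the identities $\hat h_j=30$, and one checks it fails to pair to zero against the (finite-dimensional, generically nonzero) space of vector-valued cusp forms attached to $M$ in the dual weight, so no such input exists.

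The hard part will be the second and third steps taken together: the duals of rescaled small lattices such as $A_1(2)$, $A_2(2)$ or $A_2(3)$ contain primitive vectors of several distinct norms, so each supports root-system components at several scales and one must also allow reducible $R$; producing the complete short list of compatible $(R,\{d_j\})$ for all eight lattices, and then isolating and ruling out --- by an obstruction computation rather than by pure counting --- the few genuinely borderline cases, is where the real work lies. Once the list is organized, the remaining eliminations are routine and run exactly parallel to the casework already carried out in the paper.
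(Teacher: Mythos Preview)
Your approach is essentially the one the paper takes, and it would work, but two points are worth flagging. First, the deduction $f(-1,0)=1$ is not right: for a scale-$1$ component the long-root multiplicity equals $f(-1,0)$, not $1$ (compare the proof of Theorem~\ref{th:l=22}, where the $E_8$ components carry multiplicity $\beta_0=f(-1,0)$); fortunately all of the identities you use are homogeneous in $a_0:=f(-1,0)$, so this slip is harmless for the actual eliminations. Second, the paper's proof is organized the other way around: it treats Borcherds' obstruction principle \cite[Theorem~3.1]{Bor99} as the \emph{primary} verification for all eight lattices (their discriminant groups are small, so the dual cusp-form spaces are easy to handle), and only exhibits a few cases via Theorem~\ref{th:fake-root} as an illustration --- including a trick you do not use, namely rewriting $2U\oplus 2E_8\oplus 3A_1\cong 2U\oplus E_8\oplus E_7\oplus D_4$ so that the mismatch of Coxeter numbers of $E_8$ and $E_7$ gives an immediate contradiction without any enumeration of $R$. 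Your plan of enumerating all compatible $(R,\{d_j\})$ for each $L$ and then killing the borderline survivors by obstruction is valid but considerably more laborious than simply running the obstruction check from the start.
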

\begin{proof}
The lemma may be verified by the obstruction principle \cite[Theorem 3.1]{Bor99}, because the discriminant groups of these lattices are simple.  Some cases can also be proved by Theorem \ref{th:fake-root}.  

We see from Theorem \ref{th:fake-root} and
$$
2U\oplus 2E_8\oplus 3A_1 \cong 2U\oplus E_8 \oplus E_7 \oplus D_4,
$$
that there is no reflective Borcherds product on $2U\oplus 2E_8\oplus 3A_1$, because the irreducible components $E_8$ and $E_7$ have distinct  Coxeter numbers. 

We claim that $2U\oplus 2E_8\oplus A_1\oplus A_2$ has no reflective Borcherds product. Otherwise, the associated root system is of type $2E_8\oplus A_1\oplus G_2$ with $d=1$ and by applying Theorem \ref{th:fake-root} we obtain 
$$
\frac{480a + 2a+2c+ 6a + 6b + 2k}{24} - a = 30a =2a+c/2 = 3a+b,
$$
which yields that $c=56a$, $b=27a$ and thus $k=-9a<0$, a contradiction.  

We claim that $2U\oplus 2E_8\oplus A_1(2)\oplus A_2$ has no reflective Borcherds product. Otherwise, the associated root system is of type $2E_8\oplus A_1(1/2)\oplus G_2$ and by applying Theorem \ref{th:fake-root} we obtain 
$$
\frac{480a + 2a'+2c+ 6a + 6b + 2k}{24} - a = 30a =\frac{2a'+c/2}{2} = 3a+b,
$$
which yields that $b=27a$. By considering the quasi pullback to $2U\oplus 2E_8\oplus A_1(2)$, we see from Example \ref{example} (6) that $a'=14a$ and $c=64a$. Therefore, $k=-30a<0$, a contradiction. In a similar way, we can show that $2U\oplus 2E_8\oplus A_1\oplus A_1(2)$ has no reflective Borcherds product. 
\end{proof}

\begin{lemma}\label{lem:L2}
Let $L_2$ be an even positive definite lattice of rank $2$. If $2U\oplus 2E_8\oplus L_2$ has a reflective modular form, then $L_2$ is isomorphic to $A_2$ or $2A_1$.    
\end{lemma}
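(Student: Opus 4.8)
The plan is to run, now in rank $2$, the same machine that proves Theorems~\ref{th:l=22} and \ref{th:l=21}. First I would reduce to Theorem~\ref{th:fake-root}. Write $M=2U\oplus 2E_8\oplus L_2$, of signature $(18,2)$. Symmetrizing a reflective modular form over $\widetilde{\Orth}^+(M)$ and applying Bruinier's theorem (valid since $M$ splits $U\oplus U$) produces a reflective Borcherds product $F$ on $M$; as $l=18\geq 15$, Lemma~\ref{lem:full-group} makes $F$ modular for $\Orth^+(M)$, so Theorem~\ref{th:fake-root} applies with $\Gamma=\Orth^+(M)$. Since $l=18>13$, Lemma~\ref{lem:sym-bound} gives $\beta_0:=f(-1,0)>0$. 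Because every $r\in L'$ with $r^2=2$ satisfies $f(0,r)=\beta_0>0$ by \eqref{eq:reflective-Jacobi}, and the only such vectors are the $E_8$-roots of $L$ together with possibly some norm-$2$ vectors in the $L_2$-direction ($E_8$ being unimodular), the associated rescaled root system has the shape $\mathcal R=E_8\oplus E_8\oplus R$ with the two $E_8$-factors at scale $1$ and root-multiplicity $\beta_0$, and $R$ a rescaled root system of full rank $2$ (its rank is $2$ because $C>0$ forces $\mathcal R$ to span $L\otimes\QQ$). Feeding the $E_8$-factors into \eqref{eqA} yields $C=30\beta_0$ and $k=132\beta_0-\tfrac12|\widehat R|$, where $|\widehat R|$ is the fake-root count of the rank-$2$ part.

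Next I would run through the finitely many candidates. Up to rescaling and up to the $c\neq 0$ decoration permitted for $A_1$- and $C_2$-components, the rank-$2$ rescaled root systems are $R\in\{\,2A_1,\ A_2,\ B_2=C_2,\ G_2\,\}$. For each I would impose: the equalities $\hat h_{R_i}=C=30\beta_0$ from \eqref{eqA}; the non-positivity $\rho_F^2\leq 0$ from \eqref{eqB}; the positivity $k>0$; and the singular-weight lower bound on $k$ from Theorem~\ref{th:fake-root}(3) (the leading Fourier--Jacobi coefficient $\vartheta_F$ is a holomorphic Jacobi form of weight $k$ and index $L(C)$ that vanishes along the two $E_8$-boundaries). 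Already \eqref{eqA} with \eqref{eqB} bounds each scale — for instance for $R=A_2(1/d)$ one gets root-multiplicity $a=10\beta_0 d$, weight $k=\beta_0(132-30d)$, and $\rho_F^2\leq 0$ forces $d\leq 3$ — while the remaining inputs eliminate $G_2$, $C_2$, and the decorated $A_1$-components, just as in Cases~(3)--(8) of the proof of Theorem~\ref{th:l=21}. The surviving shapes have $R$ of type $A_2$ or $2A_1$; reading off $L_2$ from the bound in Theorem~\ref{th:fake-root}(4) (evenness of $L_2$ kills the $P$-side) identifies $L_2$ with a rescaling $A_2(d)$, $d\in\{1,2,3\}$, or $A_1(m_1)\oplus A_1(m_2)$. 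The restriction lemma \cite[Lemma~5.2]{Wan19} then forces each $2U\oplus 2E_8\oplus A_1(m_i)$ to carry a reflective modular form, hence $m_i\in\{1,2\}$ by \cite[Theorem~8.1]{Wan19}, so $L_2\in\{\,2A_1,\ A_1\oplus A_1(2),\ 2A_1(2),\ A_2,\ A_2(2),\ A_2(3)\,\}$; Lemma~\ref{lem:non-reflective} excludes all but $2A_1$ and $A_2$, which finishes the proof.

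The main obstacle, exactly as in the higher-rank analogues, is pinning down the rescalings: \eqref{eqA} and weight positivity by themselves leave a short list of residual scales ($A_2(2)$, $A_2(3)$, $2A_1$ with a scale-$2$ factor, and the $C_2$/$G_2$ shapes), and eliminating them requires combining the finer inputs — non-positivity of $\rho_F^2$, the pullback lemmas of \cite{Wan19}, and the obstruction-principle computations packaged in Lemma~\ref{lem:non-reflective} — together with careful bookkeeping of the $P$-versus-$Q$ ambiguity in Theorem~\ref{th:fake-root}(4), which can only shrink a scale and so does no harm.
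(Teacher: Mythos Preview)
Your overall strategy is exactly the paper's: reduce to Theorem~\ref{th:fake-root} for $\Gamma=\Orth^+(M)$, obtain $\mathcal R=2E_8\oplus R$ with $C=30\beta_0$, and run a finite case analysis on the rank-$2$ piece $R$. The gap is in the case analysis itself. Your claim that ``the remaining inputs eliminate $G_2$, $C_2$, and the decorated $A_1$-components'' is false: these are precisely the cases that produce the two valid answers. For $R=G_2$ at $d=1$ one has $a=\beta_0$ (forced by Theorem~\ref{th:fake-root}(1) since $d=1$), $b=27\beta_0$, $k=48\beta_0$, and Theorem~\ref{th:fake-root}(4) gives $L_2=A_2$; this is Example~\ref{example}(4), and none of the inputs you list exclude it. Likewise $R=C_2$ at $d=1$ with $c\neq 0$ gives $L_2=2A_1$ (Example~\ref{example}(3)). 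Conversely, the $R=A_2$ case at $d=1$ that you keep in your final list is actually \emph{impossible}: Theorem~\ref{th:fake-root}(1) forces $a=\beta_0$ when $d=1$, while $\hat h=30\beta_0$ forces $a=10\beta_0$, a contradiction (the paper makes exactly this observation). So you recover $L_2=A_2$ in your list only through a compensating error.

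A second, smaller issue: once you have dropped the $c\neq 0$ decoration, the bound in Theorem~\ref{th:fake-root}(4) for $R=2A_1$ only gives $\bigoplus A_1(d_j)<L_2<\bigoplus P_j(d_j)$, so $L_2$ need not split as $A_1(m_1)\oplus A_1(m_2)$ and the restriction lemma \cite[Lemma~5.2]{Wan19} does not apply directly; the paper instead enumerates the finitely many even $L_2$ in that range and kills each with Lemma~\ref{lem:non-reflective}. Fixing both points --- keeping $G_2$ and $C_2$ (which pin down $L_2$ via $Q_{G_2}=A_2$, $Q_{C_2}=2A_1$), discarding $A_2$ at $d=1$, and handling the $2A_1$ bound by enumeration --- yields exactly the paper's proof.
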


\begin{proof}
Let $M=2U\oplus 2E_8\oplus L_2$ and $F$ be a reflective modular form of weight $k$ for $\Orth^+(M)$. By Theorem \ref{th:fake-root}, the root system associated with $F$ is of type $2E_8\oplus R$, where $R$ is a rank-two root system contained in $L_2'$. There are four possibilities of $R$. We discuss by cases. We set $a_0=f(-1,0)$ for simplicity. 
\begin{enumerate}
\item $R=G_2$. Theorem \ref{th:fake-root} (4) yields that $L_2=A_2(d)$. By \eqref{eqA} we have
$$
\frac{480a_0+6a+6b+2k}{24} - a_0 = 30a_0 = \frac{3a+b}{d},
$$
which implies that
$$
3a+b=30a_0d \quad \text{and} \quad a+b+k/3 = 44a_0.
$$
We deduce from $k+2b=(132-30d)a_0>0$ that $d\leq 4$. We calculate by Theorem \ref{th:fake-root} (2) that the norm of the Weyl vector is positive when $d=4$, a contradiction. Therefore, $d\leq 3$. When $d=1$, $M$ does have a reflective Borcherds product (see Example \ref{example} (4)). When $d=2$ or $3$, we know from Lemma \ref{lem:non-reflective} that $M$ has no reflective Borcherds product.

\item $R=A_2$. By \eqref{eqA} we have
$$
\frac{480a_0+6a+2k}{24} - a_0 = 30a_0 = \frac{3a}{d},
$$
which yields that
$$
a=10a_0d \quad \text{and} \quad k=(132-30d)a_0.
$$
We see from Theorem \ref{th:fake-root} (3) that $k\geq 8a_0+a$. These constrains yield that $d\leq 3$. When $d=1$, $a=10a_0\neq a_0$, a contradiction. When $d=2$, the lattice $L_2$ is bounded by $A_2(2)<L_2<A_2'(2)$, which forces that $L_2=A_2(2)$, a contradiction by Lemma \ref{lem:non-reflective}. When $d=3$, we have $A_2(3)<L_2<A_2'(3)\cong A_2$, which forces that $L_2=A_2(3)$, a contradiction by Lemma \ref{lem:non-reflective}. 

\item $R=C_2$. If $c\neq 0$, then $L_2=2A_1(d)$. We know from \cite[Theorem 8.1] {Wan19} that $2U\oplus 2E_8\oplus A_1(m)$ has a reflective modular form if and only if $m=1$ or $2$. By \cite[Lemma 5.2]{Wan19}, $d=1$ or $2$. When $d=1$, $M$ does have a reflective modular form (see Example \ref{example} (3)). When $d=2$, $M=2U\oplus 2E_8\oplus 2A_1(2)$, a contradiction by Lemma \ref{lem:non-reflective}.

We now assume that $c=0$. By \eqref{eqA} we have 
$$
\frac{480a_0+4a+4b+2k}{24} - a_0 = 30a_0 = \frac{2a+b}{d},
$$
which yields that
$$
2a+b=30a_0d \quad \text{and} \quad a+b+k/2 = 66a_0.
$$
We derive from $b+k=(132-30d)a_0>0$ that $d\leq 4$. The lattice $L_2$ is bounded by $2A_1(d)<L_2<\ZZ^2(d)$. When $d=1$, it forces that $L_2=2A_1$. When $d=2$, $L_2=2A_1(2)$ or $2A_1$. When $d=3$, $L_2=2A_1(3)$. When $d=4$, $L_2=2A_1(4)$ or $2A_1(2)$. We prove that $L_2=2A_1$ by Lemma \ref{lem:non-reflective} and the argument at the beginning of this case. 

\item $R=2A_1$. If $c\neq 0$ for some $A_1$, then $L_2$ has the form $A_1(m)\oplus A_1(n)$. The argument at the beginning of Case (3) yields that $m$ and $n$ can only be $1$ or $2$. By Lemma \ref{lem:non-reflective}, $L_2=2A_1$. 

We now assume that $c\neq 0$ for each of copies of $A_1$. By \eqref{eqA} we have
$$
\frac{480a_0+2a_1+2a_2+2k}{24}-a_0 = 30a_0 = \frac{2a_1}{d_1} = \frac{2a_2}{d_2},
$$
which yields that
$$
a_1+a_2+k=132a_0 \quad \text{and} \quad a_1=15a_0d_1 \quad \text{and} \quad a_2=15a_0d_2. 
$$
We assume that $d_1\geq d_2$ for convenience. When $d_2=1$, $a_2=15a_0\neq a_0$, a contradiction. Therefore, $d_1\geq d_2\geq 2$. We derive from Theorem \ref{th:fake-root} (3) that $k\geq 8a_0+(a_1+a_2)/2$. It follows that $(d_1,d_2)=(2,2)$ or $(d_1,d_2)=(3,2)$. By Theorem \ref{th:fake-root} (4), $L_2$ is bounded by $A_1(d_1)\oplus A_1(d_2)<L_2<\ZZ(d_1/2)\oplus \ZZ(d_2/2)$. It forces that $L_2=2A_1$, $2A_1(2)$ or $A_1(2)\oplus A_1(3)$. By Lemma \ref{lem:non-reflective} and the argument at the beginning of Case (3), $L_2=2A_1$.
\end{enumerate}
The proof is complete. 
\end{proof}

\begin{proof}[Proof of Theorem \ref{MTH2}]
The case $l>26$ was proved in \cite[Theorem 4.7 (1)]{Wan18} and the case $23\leq l \leq 25$ was proved in \cite[Theorem 4.11]{Wan18}.  The cases $l=22$ and $l=21$ were proved in Theorems \ref{th:l=22} and \ref{th:l=21}, respectively.  The case $l=26$ can be proved by combining Lemma \ref{lem:overlattice} and the argument in the proof of \cite[Theorem 4.7 (2)]{Wan18}. 
\end{proof}

\begin{remark}\label{rem:l=26}
\cite[Theorem 4.7 (2)]{Wan18} should be corrected to Statement (2) of Theorem \ref{MTH2}. Let $p$ be a prime number. As in Remark \ref{rem:l=22}, through the identification $\Orth^+(\II_{26,2})=\Orth^+(\II_{26,2}(p))$, we can view the Borcherds form $\Phi_{12}$ as a reflective modular form on $\Orth^+(\II_{26,2}(p))$. \cite[Theorem 1.4]{Bru14} then yields that $\Phi_{12}$ can be constructed as a Borcherds product on $\II_{26,2}(p)$. Therefore, $\II_{26,2}$ is not the unique lattice of signature $(26,2)$ with a reflective Borcherds product. 
\end{remark}

\section{A proof of Theorem \ref{MTH3}}\label{sec:non-free}
Let $M$ be an even lattice of signature $(l,2)$ and $\Gamma$ be a finite-index subgroup of $\Orth^+(M)$ acting on the symmetric domain $\cD(M)$. By analyzing the smoothness of the Satake--Baily--Borel compactification of the quotient $\cD(M)/\Gamma$, in 2017 Shvartsman and Vinberg \cite{SV17} proved the algebra of modular forms for $\Gamma$ is never free when $l>10$. In this section, we give a new proof of their result by the classification of reflective modular forms. 

\begin{theorem}
Let $M$ be an even lattice of signature $(l,2)$ with $l\geq 11$ and $\Gamma$ be a finite-index subgroup of $\Orth^+(M)$. 
Then the algebra of modular forms of integral weight and trivial character for $\Gamma$ is never freely generated by $l+1$ modular forms. In particular, the Satake--Baily--Borel compactification of $\cD(M)/\Gamma$ is never isomorphic to a weighted projective space.
\end{theorem}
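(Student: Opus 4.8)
The plan is to assume, for a contradiction, that the algebra $M_*(\Gamma)$ of modular forms of integral weight and trivial character for $\Gamma$ is freely generated by $l+1$ algebraically independent forms $g_0,\dots,g_l$ of weights $k_0,\dots,k_l$, and to run the modular Jacobian of the $g_i$ through Theorems \ref{th:fake-root} and \ref{MTH2}. First I would invoke \cite{Wan21}: the Jacobian $J=J(g_0,\dots,g_l)$ is then a nonzero reflective modular form on $\Gamma$ (possibly with a character) with \emph{simple zeros}, its zero divisor being the reduced ramification divisor of $\cD(M)\to\cD(M)/\Gamma$, that is, $\sum_r r^\perp$ over the roots $r\in M$ with $\sigma_r\in\Gamma$, each with multiplicity one; its weight is $\kappa=l+\sum_{i=0}^{l}k_i$, so $\kappa\geq 2l+1$ since every $k_i\geq 1$. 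Arguing as in \cite{Wan21} (where the free-algebra problem is reduced to this case; compare \cite[Corollary 3.2]{Ma18}), after rescaling $M$ we may assume $M=2U\oplus L$ with $\widetilde{\Orth}^+(M)<\Gamma<\Orth^+(M)$; then $J$ is a reflective Borcherds product by Bruinier's theorem, and Theorem \ref{th:fake-root} applies to it.

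The large values of $l$ come for free. If $l=21$ or $l\geq 23$ with $l\neq 26$, Theorem \ref{MTH2}(1) forbids any reflective modular form on a signature $(l,2)$ lattice, contradicting the existence of $J$. If $l=22$ or $l=26$, then by Theorem \ref{MTH2}(2) applied to the reflective Borcherds product $J$ on $2U\oplus L$ (which is of type $U\oplus K$) we get $2U\oplus L\cong 2U\oplus D_{20}$, resp.\ $\cong\II_{26,2}$; by Lemma \ref{lem:full-group} the reflective Borcherds product there is unique up to a power, so $J$ is, up to a scalar, a positive power of $\Psi_{24}$, resp.\ of $\Phi_{12}$. Since $\Psi_{24}$ vanishes on some short-root divisor with multiplicity $8$ (Example \ref{example}(7)), no power of it has simple zeros, which excludes $l=22$; and since the simple-zeros hypothesis forces $J=\Phi_{12}$ itself, of weight $12$, we contradict $\kappa\geq 53$, which excludes $l=26$.

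It remains to treat $11\leq l\leq 20$, and here Theorem \ref{th:fake-root} does the real work. Applying it to $F=J$ I would extract: the induced rescaled root system $\mathcal R=R_1(1/d_1)\oplus\dots\oplus R_m(1/d_m)$ of rank $l-2$, with fake root systems $\hat R_j$ whose root multiplicities equal $1$ apart from the exceptional contributions $c_j\in\{-1,0\}$ (as $J$ has simple zeros); the identity \eqref{eqA}, which in particular forces all fake Coxeter numbers to coincide, $\hat h_1=\dots=\hat h_m=C$, with $f(-1,0)\in\{0,1\}$; the inequality \eqref{eqB} that the Weyl vector has non-positive norm; the fact that $\vartheta_F=\eta^{2\kappa}\bigotimes_j\vartheta_{\hat R_j}$ is a holomorphic Jacobi form of weight $\kappa$ and index $L(C)$; and the sandwich bound of Part~(4) placing $L$ between $\bigoplus_j Q_j(d_j)$ and $\bigoplus_j P_j(d_j)$. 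When $f(-1,0)=0$, Part~(1) gives every $d_j\geq 2$, and \eqref{eqA} in the form $2\kappa=C\bigl(24-\sum_j d_j\rank(R_j)\bigr)$ (for simply-laced components, with the evident modifications otherwise) together with $\kappa>0$ forces $\sum_j d_j\rank(R_j)<24$, hence $\rank(\mathcal R)=l-2<12$, i.e.\ $l\leq 13$; the handful of root systems of rank $\leq 11$ that survive are then killed by \eqref{eqB}, Part~(4) and the holomorphicity of $\vartheta_F$. When $f(-1,0)=1$, any component $R_j$ with $d_j=1$ consists of $2$-roots lying in $L$, so this case is within reach of the classification results \cite{Wan18,Wan19}, and combining \eqref{eqA} with the weight relation $\kappa=l+\sum k_i$, with \eqref{eqB}, with Part~(4) and with Theorem \ref{MTH2} (via the isometry class of $2U\oplus L$) leaves only finitely many candidates $(\mathcal R,(k_i))$, each of which fails a direct check. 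I expect this last, finite but delicate case analysis — and in particular carrying out the reduction so that the simple-zeros property survives, without which the root count in \eqref{eqA} no longer forces $\sum_j|\hat R_j|$ to be small (symmetrizing $J$ to the full orthogonal group, say, inflates all multiplicities and breaks the bound) — to be the main obstacle. The final assertion is then immediate: a weighted projective space of dimension $l$ is $\operatorname{Proj}$ of a graded polynomial ring in $l+1$ variables, whose section ring of $\mathcal O(1)$ is that polynomial ring, so an isomorphism $\overline{\cD(M)/\Gamma}\cong\PP(k_0,\dots,k_l)$ would exhibit $M_*(\Gamma)$ as free on $l+1$ generators, which we have just excluded.
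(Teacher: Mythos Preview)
Your overall strategy is right, and you correctly isolate the crux: one must pass to a $2U\oplus L$ model to invoke Theorem~\ref{th:fake-root}, and doing so by symmetrizing $J$ destroys simple zeros. The assertion that ``after rescaling we may assume $M=2U\oplus L$ with $\widetilde{\Orth}^+(M)<\Gamma$'' is not supported by the references you cite; Ma's result only gives $\Orth^+(M)\subset\Orth^+(M_2)$ for some $M_2=2U\oplus L$, and there is no reason $\Gamma$ contains $\widetilde{\Orth}^+(M_2)$. So the gap you flag is real and is exactly what has to be filled.

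The paper fills it not by shrinking $\Gamma$ but by a decompose--symmetrize--divide trick. From \cite{Wan21} one has $J^2=\prod_i J_i$ where each $J_i$ is $\Gamma$-modular and vanishes precisely (with multiplicity $2$) on a single $\Gamma$-orbit $\Gamma\cdot\lambda_i^\perp$. Symmetrize each $J_i$ separately to $\Orth^+(M_2)$; the result $\widetilde J_i$ is a Borcherds product on $M_2$ vanishing on a single $\Orth^+(M_2)$-orbit with some multiplicity $m_i$, so the Borcherds lift of $\tilde f_i/m_i$ is a reflective product $F_i$ on $M_2$ with \emph{simple} zeros on that orbit. Set $F=\prod F_i$ over the distinct $i$. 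Then $F/J$ is holomorphic for $\Gamma$, hence $\operatorname{wt}(F)\geq\kappa\geq l+(l/2-1)(l+1)$. Now $F$ is a reflective Borcherds product for $\Orth^+(M_2)$ with simple zeros on each of its orbits, and Theorem~\ref{th:fake-root} applies cleanly.

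Once this is in place, the paper's endgame is more uniform than your large-$l$/small-$l$ split. For each $F_i$ with $d_i\geq 3$, $\eta^8 f_i$ would be holomorphic of weight $5-l/2<0$, so all $d_i\leq 2$. If some $d_i=1$ then \cite[Theorem~1.2]{Wan23a} pins $M_2$ to $2U\oplus 2E_8$ or $\II_{26,2}$, where $F$ has weight $132$ resp.\ $12$, violating $\operatorname{wt}(F)\geq l+(l/2-1)(l+1)$. Hence every $d_i=2$; then $\eta^{12}f$ is holomorphic of weight $7-l/2$, forcing $l\leq 13$, and the fake root system of $F$ has only $ADE$-components with $d=2$ and all multiplicities $1$ (or $(A_1,1|{-}1;2)$). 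The equal--Coxeter--number constraint leaves a short explicit list in ranks $9,10,11$, and each gives $\operatorname{wt}(F)$ far below the required bound. Your $f(-1,0)=1$ case thus dissolves into the single citation of \cite{Wan23a}, with no residual case analysis.
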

\begin{proof}
Suppose that the algebra of modular forms for $\Gamma$ is freely generated by $l+1$ modular forms.  By \cite[Theorem 3.5]{Wan21},  the modular Jacobian $J$ of the $l+1$ generators is a cusp form of weight $k$ and the determinant character for $\Gamma$ which vanishes precisely with multiplicity one on all quadratic divisors $\lambda^\perp$ for $\lambda \in M$ such that $\sigma_\lambda \in \Gamma$. The weight $k$ equals $l$ plus the sum of the weights of the $l+1$ generators. Moreover, there exist positive norm vectors $\lambda_i \in M$ for $1\leq i\leq s$ such that $J^2$ can be decomposed as $J^2=\prod_{i=1}^s J_i$, where each $J_i$ is a modular form of trivial character for $\Gamma$ and vanishes precisely with multiplicity two on the orbit $\Gamma\cdot \lambda_i^\perp$.  

By \cite[Corollory 3.2]{Ma18}, there exists a lattice $M_1$ on $M\otimes\QQ$ such that $\Orth^+(M) \subset \Orth^+(M_1)$ and that $M_1$ is a scaling of an even lattice of type $M_2=2U\oplus L$. For each $i$ we define 
$$
\widetilde{J}_i = \prod_{g \in \Orth^+(M_2) / \Gamma} J_i|g
$$
which is a modular form for $\Orth^+(M_2)$ and vanishes precisely on the orbit $\Orth^+(M_2)\cdot \lambda_i^\perp$ with some multiplicity $m_i$. By Bruinier's result \cite{Bru02}, $\widetilde{J}_i$ is a Borcherds product on $M_2$. We denote the input of $\widetilde{J}_i$ by $\tilde{f}_i$. Then the Borcherds product of $\tilde{f}_i/m_i$ defines a reflective modular form for $\Orth^+(M_2)$ vanishing precisely with multiplicity one on $\Orth^+(M_2)\cdot \lambda_i^\perp$. Let $F_i$ denote this product and $f_i$ denote its input.  We denote by $F$ the product of all distinct $F_i$. Then $F/J$ is a holomorphic modular form for $\Gamma$. It follows that $F$ is a cusp form and its weight is at least $k$. Recall that $k\geq l+(l/2-1)(l+1)$. 

For convenience we assume that every $\lambda_i$ is a primitive vector of $M_2'$. Then there exists a positive integer $d_i$ such that $\lambda_i^2=2/d_i$ and $d_i\lambda_i \in M_2$. The principal part of $f_i$ consists of Fourier coefficients of type $q^{-1/d_i}e_x$ or  $q^{-1/d_i}e_x - q^{-1/(4d_i)}e_{x/2}$ by Lemma \ref{lem:principal}.

We claim that $d_i\leq 2$ for all $i$. Suppose that there is some $d_i\geq 3$. Then $\eta^{8}f_i$ would define a holomorphic modular form of weight $5-l/2<0$ for the Weil representation $\rho_{M_2}$, a contradiction. 

Suppose that there is some $d_i=1$. We conclude from \cite[Theorem 1.2]{Wan23a} that $M_2=2U\oplus 2E_8$ or $\II_{26,2}$. The above $F$ has weight $132$ or $12$, respectively. The weight of $F$ is too small, contradicting the inequality $k\geq l+(l/2-1)(l+1)$.

Therefore, every $d_i$ is $2$. Let $f$ be the input of $F$. Then $\eta^{12}f$ defines a holomorphic modular form of weight $7-l/2$ for the Weil representation, which yields that $l\leq 13$. Thus $L$ has rank $9$, $10$ or $11$. The fake root system $\hat{\mathcal{R}}$ associated with $F$ is of type $ADE$ with $d=2$. More precisely, its irreducible components have only the following possibilities:
$$
(A_1,1|-1;2), \quad (A_1,1|0;2), \quad (A_n,1;2), n\geq 2, \quad (D_n,1;2), n\geq 4, \quad (E_n,1;2), n=6,7,8. 
$$
Since $\hat{\mathcal{R}}$ has rank $9$, $10$ or $11$ and its irreducible components have the same fake Coxeter number, it has only the following possibilities:
\begin{align*}
&(A_1,1|-1;2)^{n}, \quad (A_1,1|0;2)^n, \quad \text{for $n=9,10,11$,}\\
& (D_n,1;2), \quad (A_n,1;2), \quad \text{for $n=9,10,11$,} \\
& (D_5,1;2)^2, \quad (A_5,1;2)^2, \quad (D_4,1;2)\oplus (A_5,1;2). 
\end{align*}
We derive from \eqref{eqA} that the weight $k=9$ for the first possibility and $k=(6-n/2)h$ for the other possibilities, where $n=l-2$ and $h$ are respectively the rank and the  Coxeter number of the root system $\mathcal{R}$. For example, for the last possibility $n=9$, $h=6$ and $k=9$. The weight $k$ is too small, which contradicts the inequality $k\geq l+(l/2-1)(l+1)$. The proof is complete. 
\end{proof}

\section{Reflective hyperbolic lattices and a proof of Theorem \ref{MTH4}}\label{sec:finiteness}
In this section we review a connection between reflective modular forms and reflective hyperbolic lattices and apply it to the proof of Theorem \ref{MTH4}. 

\subsection{Reflective hyperbolic lattices}\label{subsec:Nikulin}
Let $S$ be an even lattice of signature $(s,1)$ with $s\geq 2$. Such $S$ is called hyperbolic. We define the half-cone $V^+(S)$ as a connected component of the space 
$$
V(S)=\{ x \in S\otimes \RR : x^2 <0 \}.
$$
Let $\Orth^+(S)$ denote the index-two subgroup of $\Orth(S)$ that fixes $V^+(S)$. Let $W$ be the subgroup of $\Orth^+(S)$ generated by reflections and $\mathcal{M}$ be an associated fundamental polyhedron. We denote by $A(\mathcal{M})$ the subgroup of $\Orth^+(S)$ that fixes $\mathcal{M}$. The semi-direct product of $W$ by $A(\mathcal{M})$ is a subgroup of $\Orth^+(S)$. If $A(\mathcal{M})$ has finite index in the quotient group $\Orth^+(S) / W$ then $S$ is called \textit{reflective}. A reflective hyperbolic lattice $S$ is called \textit{elliptic} if $A(\mathcal{M})$ is finite, otherwise it is called \textit{parabolic}. 

In 1980 Nikulin \cite{Nik80b} proved that the set of elliptic reflective hyperbolic lattices of fixed signature is finite up to scaling. In 1984 Vinberg \cite{Vin84} showed that there is no elliptic reflective hyperbolic lattice of signature $(s,1)$ when $s\geq 30$. In 1996 Esselmann \cite{Ess96} improved Vinberg's bound to $s\geq 22$. Later, Nikulin \cite[Theorem 1.1.3]{Nik96} proved that the set of parabolic reflective hyperbolic lattices of fixed signature is finite up to scaling. As noted by Nikulin, Vinberg's method yields that there is no parabolic reflective hyperbolic lattice of signature $(s,1)$ when $s\geq 43$. However, the expected sharp bound is $s\geq 26$. 

\subsection{Arithmetic mirror symmetry conjecture}
There is a connection between reflective modular forms and reflective hyperbolic lattices. 

In 1998 Borcherds \cite[Theorem 12.1]{Bor98} proved that if $U\oplus S$ has a reflective Borcherds product with a nonzero Weyl vector then the hyperbolic lattice $S$ is reflective. Moreover, if the Weyl vector has negative norm, then $S$ is of elliptic type. Later, he \cite{Bor00} applied this criterion to the Borcherds product of Example \ref{example} (7) and proved that $U\oplus D_{20}$ is elliptic reflective. By Esselmann's result \cite{Ess96}, this gives an example of elliptic reflective hyperbolic lattices of largest rank.   

In the same year, Gritsenko and Nikulin \cite[Conjecture 2.2.4]{GN98a} proposed the following arithmetic mirror symmetry conjecture to describe this connection in the more general case. 

\begin{conjecture}[Arithmetic Mirror Symmetry Conjecture]\label{conj}
\noindent
\begin{enumerate}
    \item[(i)] Let $M$ be an even lattice of signature $(l,2)$ with $l\geq 3$ and $c$ be a primitive norm zero vector of $M$. Suppose that there is a reflective modular form for $\Orth^+(M)$ which vanishes on some quadratic divisor $r^\perp$ for $r \in c_M^\perp / \ZZ c$, where $c_M^\perp=\{x\in M: (x,c)=0\}$. Then the hyperbolic even lattice $c_M^\perp / \ZZ c$ is reflective. 
    \item[(ii)] Any reflective hyperbolic even lattice $S$ may be obtained by the above construction.  
\end{enumerate}
\end{conjecture}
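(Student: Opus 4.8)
\emph{Proof proposal for Conjecture~\ref{conj}.}

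\emph{Part (i).} My plan is to reduce to a reflective Borcherds product on a lattice splitting $2U$, extract the Weyl vector from Theorem~\ref{th:fake-root}, and feed it into Borcherds' criterion \cite[Theorem 12.1]{Bor98}. First I would fix the primitive isotropic vector $c$, assume (as is forced when $c$ has level one in $M$) that $M$ splits $U$ at $c$, and write $M=U\oplus K$ with $K\cong c_M^\perp/\ZZ c$ hyperbolic of signature $(l-1,1)$; the root $r$ of the hypothesis then becomes a root of $K$. For $l\ge 5$ the space $K\otimes\QQ$ is indefinite in at least five variables, hence isotropic by Meyer's theorem, so $K$ contains a primitive isotropic vector and, after an isometry, $M\cong 2U\oplus L$ with $c$ spanning an isotropic line of the first hyperbolic plane. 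Replacing the given form by the product of its translates over $\Gamma\backslash\Orth^+(M)$ and then over $\widetilde{\Orth}^+(M)\backslash\Orth^+(M)$ yields a reflective modular form for $\widetilde{\Orth}^+(M)$ still vanishing on a divisor orthogonal to $c$; by Bruinier's theorem \cite{Bru02,Bru14} this is, up to a constant, a reflective Borcherds product $F$, modular for $\Orth^+(M)$, so Theorem~\ref{th:fake-root} applies.

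Next I would observe that the hypothesis forces the root set $\mathcal{R}$ of Theorem~\ref{th:fake-root} to be nonempty: using the Eichler criterion to move $r$ into the standard ``$L'$-type'' position shows that $F$ vanishes on some $\ell^\perp$ with $0\neq\ell\in L'$, i.e. $f(0,\ell)\neq 0$. Then Theorem~\ref{th:fake-root}(2) gives that the Weyl vector $\rho_F$ at the cusp $c$ is nonzero with $\latt{\rho_F,\rho_F}\le 0$ by \eqref{eqB}. Now \cite[Theorem 12.1]{Bor98} concludes that $c_M^\perp/\ZZ c$ is reflective, of elliptic type when $\latt{\rho_F,\rho_F}<0$ and parabolic when $\latt{\rho_F,\rho_F}=0$. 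For the residual small ranks $l=3,4$, where $K\otimes\QQ$ need not be isotropic, I would treat $K$ directly: either it already splits $U$ and the previous argument applies, or one passes to an even overlattice of $M$ by Lemma~\ref{lem:overlattice} (overlattices only enlarge the reflective divisor) and invokes the known low-rank classifications.

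\emph{Part (ii).} This converse is the deep half, and I expect no uniform argument. The plan is: by Nikulin's finiteness theorems \cite{Nik80b,Nik96}, together with the Vinberg--Esselmann rank bounds, it suffices up to rescaling to realise each reflective hyperbolic $S$ from a finite explicit list. For fixed $S$ of signature $(s,1)$ I would seek a weakly holomorphic vector-valued modular form of weight $1-s/2$ for the Weil representation of $U\oplus S$ whose principal part is supported on the root classes of $S$ in the reflective shape described in Lemma~\ref{lem:principal} and whose remaining Fourier coefficients are non-negative, so that its Borcherds lift is a holomorphic reflective modular form on $U\oplus S$. Whether such an input exists is governed by Borcherds' obstruction principle \cite[Theorem 3.1]{Bor99}: the prescribed principal part is realisable exactly when it pairs to zero against every holomorphic vector-valued cusp form of the dual weight, after which one must verify positivity of the resulting coefficients. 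I would carry this out case by case, possibly enlarging $U\oplus S$ to an overlattice $M$ with $c_M^\perp/\ZZ c=S$ to simplify the discriminant form as in Lemma~\ref{lem:overlattice}; in the cases where a reflective denominator function of a (generalised) Kac--Moody algebra is already known, it supplies the required product directly.

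\emph{The main obstacle.} For (i) the delicate points are: the case where $M$ has no primitive isotropic vector of level one at which to split $U$ (then there is no Borcherds product at $c$ and neither Bruinier's theorem nor Theorem~\ref{th:fake-root} applies, while passing to an overlattice can destroy reflectivity, cf. Remark~\ref{rem:l=22}); and, even when $M\cong 2U\oplus L$, the step that forces $\rho_F\neq 0$ precisely at the cusp $c$ — the hypothesis that $F$ vanishes on some $r^\perp$ with $r\perp c$ is exactly what makes $\mathcal R$ nonempty there, and making this implication rigorous for an arbitrary finite-index $\Gamma$ rather than $\widetilde{\Orth}^+(M)$ requires care. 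For (ii) the genuine difficulty is that there is no known construction attaching an automorphic product to an arbitrary reflective hyperbolic lattice: the obstruction space in \cite{Bor99} can be nonzero, and it is conceivable that some reflective $S$ — especially parabolic ones, or those admitting no $U\oplus K$ model after rescaling — carry no reflective modular form at all, so that even the statement of (ii) may need the elliptic-type refinement. This is why (ii) is the true heart of the conjecture, and I would not expect the methods of the present paper to settle it.
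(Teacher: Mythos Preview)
The paper does not attempt to prove this conjecture. It records that Part~(i) was established by Looijenga \cite[Corollary 5.1]{Loo03}, and it then \emph{disproves} Part~(ii) by exhibiting an explicit counterexample.

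For Part~(i), your route through Theorem~\ref{th:fake-root} and Borcherds' criterion \cite[Theorem 12.1]{Bor98} is strictly narrower than Looijenga's result: it needs $M$ (or a suitable overlattice) to split $2U$ so that Bruinier's converse theorem and Theorem~\ref{th:fake-root} apply, and you correctly flag the cusps of higher level and the small-rank cases as gaps. Looijenga's argument is geometric and works for an arbitrary primitive isotropic $c\in M$ without any splitting hypothesis, which is why the paper simply cites it rather than reproving it.

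For Part~(ii) your proposal has a genuine error of direction: the statement is \emph{false}, so no construction strategy can succeed. The paper shows that $S=U\oplus E_8\oplus E_7$, which is reflective hyperbolic of elliptic type, cannot arise from any pair $(M,c)$ as in Part~(i). The argument runs as follows: any such $M$ would have length $l(M)\le 3$ and hence split $2U$; the overlattice of $M$ generated by $c/n$ is isomorphic to $2U\oplus E_8\oplus E_7$; by Lemmas~\ref{lem:sym-bound} and~\ref{lem:overlattice} this overlattice would then carry a reflective Borcherds product; but Theorem~\ref{th:fake-root} rules this out since the irreducible components $E_8$ and $E_7$ have different Coxeter numbers, contradicting \eqref{eqA}. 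In particular your suggested fallback of restricting (ii) to the elliptic case does not rescue it, as this counterexample is already elliptic.
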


Part (i) of the conjecture was proved by Looijenga \cite[Corollary 5.1]{Loo03} in 2003. We will give an explicit counterexample to Part (ii) later.

\subsection{A proof of Theorem \ref{MTH4}} Part (3) of Theorem \ref{MTH4} has been proven in Lemma \ref{lem:sym-bound}. We here prove the first two parts. 

\begin{theorem}
\noindent
\begin{enumerate}
    \item The set of even lattices of type $2U\oplus L$ with a reflective modular form is finite. 
    \item The set of even lattices of signature $(l,2)$ and type $U\oplus K$ which has a reflective Borcherds product vanishing on $r^\perp$ is finite when $l\geq 7$, where $r\in U$ and $r^2=2$. 
\end{enumerate}    
\end{theorem}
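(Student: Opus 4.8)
The plan is to derive both statements from the bridge between reflective modular forms and reflective hyperbolic lattices — part (i) of Conjecture~\ref{conj}, proved by Looijenga \cite{Loo03} — combined with Nikulin's finiteness of reflective hyperbolic lattices of bounded signature \cite{Nik80b,Nik96} (the rank being automatically bounded, by the bounds of Vinberg and Esselmann, or by Theorem~\ref{MTH2}). For Part (1), given a reflective modular form on $M=2U\oplus L$, I would first symmetrise and invoke Bruinier's theorem to reduce to a reflective Borcherds product $F$, and then pass to its symmetrisation $\widetilde{F}$ over $\Orth^+(M)/\widetilde{\Orth}^+(M)$, which is reflective for the full group $\Orth^+(M)$. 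Being non-constant, $F$ vanishes on some $r^\perp$, and by the shape of the divisor of a reflective Borcherds product on a lattice splitting $U$ (Lemma~\ref{lem:principal}) the root $r$ may be taken, up to $\widetilde{\Orth}^+(M)$, to be $(0,-1,0,1,0)$ or $(0,0,v,1,0)$ with $v\in L'$ in the coordinates of $2U\oplus L'$. Both are orthogonal to the primitive isotropic vector $c=(1,0,0,0,0)$, for which $c_M^\perp/\ZZ c\cong U\oplus L$; moreover $\sigma_r$ fixes $c$, so $r$ descends to a root $\bar r$ of $U\oplus L$. Looijenga's theorem applied to $\widetilde{F}$ then shows $U\oplus L$ is a reflective hyperbolic lattice, and since it contains the unimodular lattice $U$ it cannot be a proper rescaling; by Nikulin's finiteness it is, up to isometry, one of finitely many lattices, and $L$ is the orthogonal complement of a copy of $U$ inside it, of which there are only finitely many up to isometry. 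Hence only finitely many $L$ occur.

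For Part (2), let $M=U\oplus K$ of signature $(l,2)$ with $l\geq 7$ carry a reflective Borcherds product $F$ vanishing on $\lambda^\perp$ with $\lambda\in U$, $\lambda^2=2$; after symmetrisation $F$ is reflective for $\Orth^+(M)$. Choosing a basis $e,f$ of $U$ with $(e,f)=1$ and $\lambda=e+f$, the sublattice $\lambda^\perp_M$ is $\langle -2\rangle\oplus K$ (generated by $e-f$ and $K$), which is indefinite of rank $l+1\geq 8$ and hence contains a primitive isotropic vector; using that $K$ has rank at least $7$ I would choose such a $c$ whose divisor $\operatorname{div}_M(c)$ in $M$ equals $1$ — for instance $c=(e-f)+k$ with $k\in K$ of norm $2$ when $K$ represents $2$, and otherwise $c=\alpha(e-f)+k$ with $k$ a suitable divisor-one vector of $K$ of norm $2\alpha^2$ for a bounded value of $\alpha$. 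Since $(c,\lambda)=0$, $\lambda$ stays primitive in $c_M^\perp$, and $\sigma_\lambda$ fixes $c$, the image $\bar\lambda$ is a root of the hyperbolic lattice $S:=c_M^\perp/\ZZ c$ of signature $(l-1,1)$, which is therefore reflective by Looijenga's theorem. As $S$ contains the norm-$2$ vector $\bar\lambda$, the greatest common divisor of its bilinear values divides $2$, so $S$ is a rescaling by at most $2$ of a primitive lattice, and Nikulin's finiteness (with $l$ bounded, say by Theorem~\ref{MTH2}) confines $S$ to a finite list. Writing $M=\ZZ c\oplus S\oplus \ZZ z$ as abelian groups, a direct computation of the Gram matrix gives $|\det M|=\operatorname{div}_M(c)^2\,|\det S|=|\det S|$, which is bounded; since the rank of $M$ is also bounded, $M$ lies in a finite list of lattices by the Minkowski--Hermite finiteness of integral lattices of bounded rank and determinant, and hence so does $K$.

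The formal half of the argument — extracting a reflective hyperbolic lattice from $F$ via Looijenga's theorem and quoting Nikulin — is immediate; the substance is the reconstruction of the original lattice from the reflective hyperbolic one. For Part (1) this is free, since $U\oplus L$ literally reconstructs $L$. For Part (2) the two delicate points are: (a) excluding an unbounded rescaling of $S$, which is handled by the presence of the norm-$2$ vector $\bar\lambda$ and is precisely where the hypothesis that $F$ vanishes on $\lambda^\perp$ with $\lambda\in U$ is used; and (b) controlling $\operatorname{div}_M(c)$, equivalently $|\det M|$, which requires producing a divisor-one isotropic vector inside $\lambda^\perp_M$ and hence a short-vector/representation argument for $K$ that relies on its rank being at least $7$. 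If one prefers to avoid this representation lemma, an alternative for (b) is to pass first to a maximal even overlattice $K_1$ of $K$ using Lemma~\ref{lem:overlattice}: then $M_1=U\oplus K_1$ is maximal of signature $(l,2)$ with $l\geq 7$, so $l(M_1)\leq 3\leq l-2$ and $M_1$ splits two copies of $U$, placing $M_1$ in the finite list of Part (1); it then remains to bound the sublattices $K\subset K_1$ for which $U\oplus K$ again carries a reflective Borcherds product vanishing on $\lambda^\perp$, once more via the discriminant estimate above. I expect point (b), in either form, to absorb essentially all of the remaining work.
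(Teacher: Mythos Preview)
Your Part~(1) is essentially the paper's argument: apply Looijenga's theorem (Part~(i) of Conjecture~\ref{conj}) at the obvious cusp to see that $U\oplus L$ is reflective hyperbolic, then invoke Nikulin's finiteness. You are in fact more careful than the paper in two places --- checking explicitly that some root in the divisor descends to $c_M^\perp/\ZZ c$, and noting that the hyperbolic summand $U$ rules out nontrivial rescalings so that Nikulin's ``finite up to scaling'' becomes ``finite''.

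Your Part~(2) takes a genuinely different route from the paper and has a real gap at the point you yourself flag. The existence of a primitive isotropic $c\in\lambda_M^\perp$ with $\operatorname{div}_M(c)=1$ is not available in general: if, say, $K=K_0(p)$ for a large prime $p$, then every $k\in K$ has $p\mid k^2$ and $p\mid\operatorname{div}_K(k)$, so any $c=\alpha(e-f)+k$ with $c^2=0$ forces $p\mid\alpha$ and hence $p\mid\operatorname{div}_M(c)$. Your determinant identity $|\det M|=\operatorname{div}_M(c)^2\,|\det S|$ then gives no bound. Your alternative via a maximal overlattice $K_1$ does place $M_1=U\oplus K_1$ in the finite list of Part~(1), but the sentence ``it then remains to bound the sublattices $K\subset K_1$ \ldots\ once more via the discriminant estimate above'' is circular: the discriminant estimate is exactly what you could not establish for $M$.

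The paper closes this gap with a different overlattice: by \cite[Lemma~4.8]{Ma17} there is an even overlattice $M_1\supset M$ with $l(M_1)\leq 4$ and exponent $e(M_1)\in\{e(M),\,e(M)/2\}$. Lemma~\ref{lem:overlattice} transports the reflective Borcherds product to $M_1$; since $l\geq 7$ and $l(M_1)\leq 4$, Nikulin's splitting criterion gives $M_1\cong 2U\oplus L$, so Part~(1) puts $M_1$ in a finite list and hence bounds $e(M_1)$, therefore $e(M)\leq 2e(M_1)$. Bounded exponent and bounded rank bound $|\det M|$, and finiteness follows. The key idea you are missing is to choose the overlattice so that the \emph{exponent} (not merely the length) is controlled; this is what Ma's lemma supplies and what replaces your attempt to control $\operatorname{div}_M(c)$.
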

\begin{proof}
Suppose that there is a reflective modular form for $\Orth^+(2U\oplus L)$. Applying Part (i) of Conjecture \ref{conj} to $2U\oplus L$, we find that $U\oplus L$ is reflective hyperbolic. This claim also follows from Borcherds' theorem reviewed in the previous subsection, since the Weyl vector is clearly nonzero by Theorem \ref{th:fake-root} (2). By Nikulin's result in \S \ref{subsec:Nikulin}, the set of such $U\oplus L$ is finite. This proves (1). 

We now prove (2). Suppose that $M=U\oplus K$ satisfies the conditions of (2). By \cite[Lemma 4.8]{Ma17} there exists an even overlattice $M_1$ of $M$ with length $l(M_1)\leq 4$ and exponent $e(M_1)=e(M)$ or $e(M)/2$.  Recall that the length of an even lattice $T$ is the minimal number of generators of the discriminant group $T'/T$, and the exponent of $T$ is the maximal order of elements of $T'/T$. By Lemma \ref{lem:overlattice}, $M_1$ has a reflective Borcherds product. When $l\geq 7$, Nikulin's result \cite{Nik80} yields that $M_1$ splits as $2U\oplus L$. We conclude from (1) that $e(M_1)$ is bounded from above. Therefore, $e(M)$ is bounded from above too. This leads to the finiteness of $M$. 
\end{proof}

\subsection{An counterexample to Part (ii) of the Gritsenko--Nikulin conjecture}
We know from \cite[Page 492]{GN18} that $S=U\oplus E_8\oplus E_7$ is a reflective hyperbolic lattice of elliptic type. We claim that it cannot be obtained by Part (i) of Conjecture \ref{conj}. 

Suppose that we can obtain $S$ from an even lattice $M$ of signature $(17,2)$ at some primitive norm zero vector $c\in M$. Let $n$ be the positive generator of the ideal 
$$
(c,M)=\{ (c,v): v\in M \}.
$$
Then $c/n \in M'$. We choose $c'\in M$ such that $(c,c')=n$ and
$$
S=\{ v\in M : (v,c)=(v,c')=0 \}.
$$
Let $K$ be the lattice generated by $c$ and $c'$ over $\ZZ$. Then $K\oplus S < M < M' < K'\oplus S'$. It follows that the discriminant group of $M$ has length $l(M)\leq 3$. Thus $M$ splits $2U$. By assumption, there is a reflective modular form for $\Orth^+(M)$. By \cite{Bru02}, this form is a Borcherds product. It is easy to check that the overlattice of $M$ generated by $c/n$ is of type $2U\oplus E_8\oplus E_7$. Lemmas \ref{lem:sym-bound} and \ref{lem:overlattice} together yield that $2U\oplus E_8\oplus E_7$ has a reflective Borcherds product. This contradicts Theorem \ref{th:fake-root}, because the Coxeter numbers of $E_8$ and $E_7$ are different. 

\bigskip

\noindent
\textbf{Acknowledgements} 
The author is supported by the Institute for Basic Science (IBS-R003-D1). 
The author thanks Brandon Williams for verifying Lemma \ref{lem:non-reflective} with his algorithm. 

\bibliographystyle{plainnat}
\bibliofont
\bibliography{refs}

\end{document}